\title{Rosser provability and normal modal logics}
\author{Taishi Kurahashi}
\date{}
\theoremstyle{plain}
\newtheorem{thm}{Theorem}[section]
\newtheorem{lem}[thm]{Lemma}
\newtheorem{prop}[thm]{Proposition}
\newtheorem{cor}[thm]{Corollary}
\newtheorem{prob}[thm]{Problem}
\theoremstyle{definition}
\newtheorem{defn}[thm]{Definition}
\newtheorem{rem}[thm]{Remark}
\newcommand{\PA}{{\sf PA}}
\newcommand{\PL}{{\sf PL}}
\newcommand{\PR}{{\sf Pr}^R_T}
\newcommand{\PRg}{{\sf Pr}^R_g}
\newcommand{\PRgp}{{\sf Pr}^R_{g'}}
\newcommand{\Prf}{{\sf Prf}}
\newcommand{\Proof}{{\sf Proof}_T}
\newcommand{\Prov}{{\sf Prov}_T}
\newcommand{\Con}{{\sf Con}}
\newcommand{\K}{{\sf K}}
\newcommand{\KR}{{\sf KR}}
\newcommand{\GL}{{\sf GL}}
\newcommand{\KD}{{\sf KD}}
\newcommand{\KT}{{\sf T}}
\newcommand{\Four}{{\sf KD4}}
\newcommand{\Five}{{\sf KD5}}
\newcommand{\KDR}{{\sf KDR}}
\newcommand{\N}{\mathbb{N}}
\newcommand{\gdl}[1]{\ulcorner#1\urcorner}
\begin{document}

\maketitle

\abstract{
In this paper, we investigate Rosser provability predicates whose provability logics are normal modal logics. 
First, we prove that there exists a Rosser provability predicate whose provability logic is exactly the normal modal logic $\KD$.
Secondly, we introduce a new normal modal logic $\KDR$ which is a proper extension of $\KD$, and prove that there exists a Rosser provability predicate whose provability logic includes $\KDR$. 
}

\section{Introduction}\label{Sec:Intro}

In the paper \cite{Kur}, we raised the problem of the existence of a $\Sigma_2$ representation of each theory $T$ such that the provability logic of the conventional provability predicate constructed from this representation is exactly the modal logic $\KD = \K + \neg \Box \bot$. 
This problem has not been settled yet. 
Here we consider the following more general question: 
Is there a provability predicate whose provability logic is exactly $\KD$? 
In this paper, we give an affirmative answer to this problem by considering Rosser provability predicates. 

Let $T$ be any consistent recursively enumerable extension of Peano Arithmetic $\PA$. 
We say a formula ${\sf Pr}_T(x)$ is a {\it provability predicate of $T$} if it weakly represents the set of all theorems of $T$ in $\PA$, that is, for any natural number $n$, $\PA \vdash {\sf Pr}_T(\overline{n})$ if and only if $n$ is the G\"odel number of some theorem of $T$. 
An {\it arithmetical interpretation based on ${\sf Pr}_T(x)$} is a mapping $f$ from modal formulas to sentences of arithmetic such that $f$ commutes with every propositional connective and $f$ maps $\Box$ to ${\sf Pr}_T(x)$. 
Let $\PL({\sf Pr}_T)$ be the set of all modal formulas $A$ such that $T \vdash f(A)$ for each arithmetical interpretation $f$ based on ${\sf Pr}_T(x)$. 
This set is called the {\it provability logic of ${\sf Pr}_T(x)$}. 
Solovay \cite{Sol76} proved that for each standard $\Sigma_1$ provability predicate ${\sf Pr}_T(x)$ of $T$, if $T$ is $\Sigma_1$-sound, then the provability logic of ${\sf Pr}_T(x)$ is equal to the modal logic ${\sf GL}$. 
This is Solovay's arithmetical completeness theorem. 

On the other hand, Feferman \cite{Fef60} found a $\Pi_1$ representation of a theory $T$ such that $\PA$ proves the consistency statement for the provability predicate ${\sf Pr}_T^F(x)$ constructed from this representation. 
The provability logic $\PL({\sf Pr}_T^F)$ of Feferman's predicate includes the modal logic $\KD$, and it is completely different from ${\sf GL}$. 
The problem of exact axiomatization of $\PL({\sf Pr}_T^F)$ was studied by Montagna \cite{Mon78} and Visser \cite{Vis89}, but it has not been settled yet. 
Shavrukov \cite{Sha94} found a Feferman-like $\Sigma_2$ provability predicate whose provability logic is exactly the modal logic $\KD + \Box p \to \Box ((\Box q \to q) \lor \Box p)$. 

Rosser provability predicate $\PR(x)$ was essentially introduced by Rosser \cite{Ros36} to improve G\"odel's first incompleteness theorem. 
It is well-known that the consistency statement for $\PR(x)$ is provable in $\PA$. 
Then by the proof of G\"odel's second incompleteness theorem, at least one of the principles $({\bf K})$: $\Box(p \to q) \to (\Box p \to \Box q)$ and $({\bf 4})$: $\Box p \to \Box \Box p$ is invalid for each Rosser provability predicate (Whether the principle $({\bf K})$ is valid for Rosser provability predicates was asked by Kreisel and Takeuti \cite{KT74}). 
Actually, Guaspari and Solovay \cite{GS79} and Arai \cite{Ara90} showed that whether $({\bf K})$ or $({\bf 4})$ is invalid for $\PR(x)$ depends on the choice of $\PR(x)$. 
More precisely, by using the modal logical result of Guaspari and Solovay, it can be shown that there exists a Rosser provability predicate for which neither of these principles is valid. 
Also Arai proved the existence of a Rosser provability predicate satisfying $({\bf K})$ and a Rosser provability predicate satisfying $({\bf 4})$. 
%Kurahashi and Kikuchi \cite{KK17} proved the existence of a Rosser provability predicate for which $({\bf K})$ and an additional condition are valid. 

Modal logical investigations of Rosser provability predicates were initiated by Guaspari and Solovay, and continued by Visser \cite{Vis89}, Shavrukov \cite{Sha91} and others. 
In particular, Shavrukov introduced the bimodal logic ${\sf GR}$ for usual provability and Rosser provability, and proved the arithmetical completeness theorem for ${\sf GR}$. 
Although Shavrukov's arithmetically complete logic ${\sf GR}$ does not contain $({\bf K})$ for the modality of Rosser provability as an axiom, it is worth considering $({\bf K})$ for Rosser provability from modal logical viewpoint. 
That is, it is easy to show that the provability logic $\PL(\PR)$ is a normal modal logic if and only if $({\bf K})$ is valid for $\PR(x)$. 
%Thus whether $\PL(\PR)$ is normal or not is dependent on the choice of $\PR(x)$. 
If $\PL(\PR)$ is normal, then $\PL(\PR)$ includes $\KD$. 

In this paper, we investigate Rosser provability predicates whose provability logics are normal. 
In Section \ref{Sec:ACT}, we give an affirmative answer to the problem raised in the first paragraph of this section, that is, we prove that there exists a Rosser provability predicate $\PR(x)$ of $T$ such that $\PL(\PR)$ is exactly $\KD$. 
In Section \ref{Sec:KDR}, we introduce and study a new normal modal logic $\KDR = \KD + \Box \neg p \to \Box \neg \Box p$. 
In particular, we prove that there exists a Rosser provability predicate $\PR(x)$ of $T$ such that $\KDR \subseteq \PL(\PR)$. 
Thus we obtain a Rosser provability predicate whose provability logic is a proper extension of $\KD$. 
Whether there exists a Rosser provability predicate whose provability logic is exactly $\KDR$ is still open.

\section{Preliminaries}\label{Sec:Pre}

The axioms of the modal logic $\K$ are all propositional tautologies in the language of propositional modal logic and the formula $\Box (p \to q) \to (\Box p \to \Box q)$. 
The inference rules for $\K$ are modus ponens, necessitation and substitution. 
Each modal logic $L$ is identified with the set of all theorems of $L$. 
We say a modal logic $L$ is {\it normal} if $\K \subseteq L$ and $L$ is closed under modus ponens, necessitation and substitution. 
For any modal logic $L$ and modal formula $A$, let $L + A$ denote the least normal modal logic whose axioms are those of $L$ and the formula $A$. 
Several normal modal logics are obtained by adding axioms to $\K$ as follows (see \cite{Boo93,CZ97} for more details): 

\begin{enumerate}
	\item $\KD = \K + \neg \Box \bot$. 
	\item $\KT = \K + \Box p \to p$. 
	\item $\Four = \KD + \Box p \to \Box \Box p$. 
	\item $\Five = \KD + \neg \Box p \to \Box \neg \Box p$. 
	\item $\GL = \K + \Box (\Box p \to p) \to \Box p$. 
\end{enumerate}

A {\it Kripke frame} is a tuple $(W, \prec)$ where $W$ is a nonempty set and $\prec$ is a binary relation on $W$. 
A {\it Kripke model} is a tuple $M= (W, \prec, \Vdash)$ where $(W, \prec)$ is a Kripke frame, 
and $\Vdash$ is a binary relation between $W$ and the set of all modal formulas satisfying the usual propositional conditions for satisfaction and the following condition: 
$x \Vdash \Box A$ if and only if for all $y \in W$, $y \Vdash A$ if $x \prec y$.
We say a modal formula $A$ is {\it valid} in a Kripke model $M = (W, \prec, \Vdash)$ if for all $w \in W$, $w \Vdash A$. 
We say that $M$ is finite if $W$ is finite. 
Also we say that $M$ is {\it serial} if for any $x \in W$, there exists $y \in W$ such that $x \prec y$. 

It is known that the modal logic $\KD$ is sound and complete with respect to the class of all finite serial Kripke models. 
Moreover, the following theorem holds. 

\begin{thm}[Kripke completeness theorem for $\KD$ (see \cite{Rau83})]\label{KCKD}
For each modal formula $A$ which is not provable in $\KD$, we can primitive recursively find a finite serial Kripke model in which $A$ is not valid. 
\end{thm}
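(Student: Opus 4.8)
The plan is to construct the countermodel directly as a finite ``canonical model'' built over the subformulas of $A$, in the style of the usual finite model property argument, and then to check that every step of that construction is primitive recursive in $A$.

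First I would fix $\Phi$ to be the set of all subformulas of $A$ together with their negations (identifying $\neg\neg B$ with $B$), so that $\Phi$ is finite, of size linearly bounded in the length of $A$, and computed primitive recursively from $A$. Let $W$ be the set of all maximal $\KD$-consistent subsets of $\Phi$, i.e.\ those $\Gamma\subseteq\Phi$ which are $\KD$-consistent and decide every member of $\Phi$; for $\Gamma,\Delta\in W$ put $\Gamma\prec\Delta$ iff $\{B : \Box B\in\Gamma\}\subseteq\Delta$; and let $\Gamma\Vdash p$ iff $p\in\Gamma$, for the propositional variables $p$ occurring in $A$. The combinatorial core is a Truth Lemma: for every $B\in\Phi$ and every $\Gamma\in W$, $\Gamma\Vdash B$ iff $B\in\Gamma$. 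The Boolean cases follow immediately from maximal consistency; for $B=\Box C$ the forward implication is immediate from the definition of $\prec$ together with the induction hypothesis, and the backward implication is the standard existence argument: if $\Box C\notin\Gamma$ then $\{D : \Box D\in\Gamma\}\cup\{\neg C\}$ is $\KD$-consistent (otherwise $\KD$ would prove $\bigwedge_i\Box D_i\to\Box C$ by necessitation and the $\K$-axiom, forcing $\Box C\in\Gamma$), so it extends to some $\Delta\in W$ with $C\notin\Delta$ and $\Gamma\prec\Delta$, whence $\Gamma\not\Vdash\Box C$ by the induction hypothesis. All the extensions of consistent subsets of $\Phi$ to members of $W$ are obtained by a finite greedy ``Lindenbaum'' step over the finite set $\Phi$.

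Seriality is the one genuinely $\KD$-specific point: for every $\Gamma\in W$ the set $\{B : \Box B\in\Gamma\}$ is $\KD$-consistent, since if not then $\KD\vdash\neg\bigwedge_i B_i$, hence $\KD\vdash\bigwedge_i\Box B_i\to\Box\bot$ by necessitation and the $\K$-axiom, and since $\neg\Box\bot\in\KD$ this contradicts the consistency of $\Gamma$; so this set extends to some $\Delta\in W$ with $\Gamma\prec\Delta$. Also $W\neq\varnothing$ because $\KD$ is consistent. Finally, since $A\notin\KD$, the set $\{\neg A\}$ is $\KD$-consistent and extends to some $\Gamma_0\in W$ with $A\notin\Gamma_0$, so $\Gamma_0\not\Vdash A$ by the Truth Lemma; hence $A$ is not valid in the finite serial Kripke model $(W,\prec,\Vdash)$.

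It remains to see that the map $A\mapsto(W,\prec,\Vdash)$ is primitive recursive. The only non-syntactic ingredient is the test, for each of the $\leq 2^{|\Phi|}$ subsets $\Gamma\subseteq\Phi$, whether $\Gamma$ is $\KD$-consistent; so it suffices that ``a given finite subset of $\Phi$ is $\KD$-consistent'' is decidable with a primitive recursive bound. This is exactly what a terminating, subformula-bounded tableau (or sequent) calculus for $\KD$ provides, which is the content of the cited work \cite{Rau83}; in fact such a calculus also lets one read the serial countermodel off directly from an open tableau for $A$. The main obstacle I anticipate is precisely this last point: arranging the consistency tests, equivalently the terminating proof search, so that the bound is primitive recursive and so that the argument does not circularly invoke the completeness theorem being proved. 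The purely model-theoretic part — the Truth Lemma and the seriality computation — is routine.
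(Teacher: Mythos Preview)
The paper does not prove this theorem; it is stated with a reference to \cite{Rau83} and used as a black box, so there is no proof in the paper to compare against. Your outline is the standard filtration/finite canonical model argument for $\KD$ and is correct as a sketch: the Truth Lemma, the seriality step, and the Lindenbaum extensions over the finite subformula set all go through as you describe.

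On your flagged concern about circularity: there is none in practice, because a terminating tableau (or sequent) calculus for $\KD$ with the subformula property simultaneously yields the decision procedure \emph{and} the countermodel extraction. In fact the tableau route makes your two-phase plan (first build the filtrated canonical model, then separately justify the consistency tests) somewhat redundant---an open tableau for $A$ already \emph{is} the finite serial countermodel, with a primitive recursive bound on its size coming from the termination argument. That direct tableau construction is closer to what the cited reference actually does, and it is also what the paper needs downstream: a single primitive recursive function $A \mapsto M$ that returns a finite serial countermodel whenever $\KD \nvdash A$.
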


%We assume that the logical symbols of first-order logic are $\neg, \land$ and $\exists$, and the other symbols such as $\lor$, $\to$ and $\forall$ are introduced as abbreviations. 
Throughout this paper, we assume that $T$ always denotes a recursively enumerable consistent extension of Peano Arithmetic $\PA$ in the language $\mathcal{L}_A$ of first-order arithmetic. 
%We also assume that $\mathcal{L}_A$ contains function symbols for all primitive recursive functions. 
Let $\omega$ be the set of all natural numbers. 
For each $n \in \omega$, the numeral for $n$ is denoted by $\overline{n}$. 
We fix a natural G\"odel numbering such that $0$ is not a G\"odel number of any object, and that G\"odel numbers of terms and formulas in which $\overline{n}$ occurs are larger than $n$. 
For each $\mathcal{L}_A$-formula $\varphi$, let $\gdl{\varphi}$ be the numeral for the G\"odel number of $\varphi$. 
Let $\{\varphi_k\}_{k \in \omega}$ be the repetition-free effective sequence of all formulas arranged in ascending order of G\"odel numbers. 
We assume that if $\varphi_k$ is a subformula of $\varphi_l$, then $k \leq l$.

We say a formula ${\sf Pr}_T(x)$ is a {\it provability predicate of $T$} if for any $n \in \omega$, $\PA \vdash {\sf Pr}_T(\overline{n})$ if and only if $n$ is the G\"odel number of some $T$-provable formula. 
We fix a primitive recursive formula $\Proof(x, y)$ which is a natural formalization of the relation ``$y$ is a $T$-proof of a formula $x$'' with the usual adequate properties. 
We may assume $\PA \vdash \forall x \forall y(\Proof(x, y) \to x \leq y)$. 
Let $\Prov(x)$ be the formula $\exists y \Proof(x, y)$. 
Then $\Prov(x)$ is a provability predicate of $T$ satisfying several familiar conditions such as $\PA \vdash \Prov(\gdl{\varphi \to \psi}) \to (\Prov(\gdl{\varphi}) \to \Prov(\gdl{\psi}))$ and $\PA \vdash \Prov(\gdl{\varphi}) \to \Prov(\gdl{\Prov(\gdl{\varphi})})$. 
Let $\Con_T$ be the sentence $\neg \Prov(\gdl{0=1})$ expressing the consistency of $T$. 
We say a formula $\Prf_T(x, y)$ is a {\it proof predicate of $T$} if $\Prf_T(x, y)$ satisfies the following conditions: 
\begin{enumerate}
	\item $\Prf_T(x, y)$ is primitive recursive, 
	\item $\PA \vdash \forall x (\Prov(x) \leftrightarrow \exists y \Prf_T(x, y))$, 
	\item for any $n \in \omega$ and formula $\varphi$, $\mathbb{N} \models \Proof(\gdl{\varphi}, \overline{n}) \leftrightarrow \Prf_T(\gdl{\varphi}, \overline{n})$, 
	\item $\PA \vdash \forall x \forall x' \forall y(\Prf_T(x, y) \land \Prf_T(x', y) \to x = x')$. 
\end{enumerate}
Here $\N$ is the standard model of arithmetic. 
The last clause means that our proof predicates are single conclusion ones. 
Our formula $\Proof(x, y)$ is one of proof predicates of $T$. 
For each proof predicate $\Prf_T(x, y)$ of $T$, the $\Sigma_1$ formula
\[
	\exists y (\Prf_T(x, y) \land \forall z \leq y \neg \Prf_T(\neg(x), z))
\]
is said to be the {\it Rosser provability predicate of $\Prf_T(x, y)$} or a Rosser provability predicate of $T$, where $\neg(x)$ is a term corresponding to a primitive recursive function calculating the G\"odel number of $\neg \varphi$ from the G\"odel number of a formula $\varphi$. 
Each Rosser provability predicate of $T$ is a provability predicate of $T$. 
Also the following proposition holds. 

\begin{prop}\label{RPP}
Let $\PR(x)$ be a Rosser provability predicate of $T$ and $\varphi$ be any formula. 
If $T \vdash \neg \varphi$, then $T \vdash \neg \PR(\gdl{\varphi})$. 
\end{prop}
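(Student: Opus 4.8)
The plan is to simply unwind the definition of $\PR(x)$ and verify the resulting statement — in fact already in $\PA$. By definition $\neg \PR(\gdl{\varphi})$ is (logically equivalent to) the sentence $\forall y\,\bigl(\Prf_T(\gdl{\varphi}, y) \to \exists z \leq y\, \Prf_T(\neg(\gdl{\varphi}), z)\bigr)$, so I want to produce a single fixed witness $z$ that works for every $y$ bounding a $T$-proof of $\varphi$. This will come from combining one positive instance of $\Prf_T$ (a proof of $\neg\varphi$) with a bounded block of negative instances of $\Prf_T$ (no short proof of $\varphi$).

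Assume $T \vdash \neg \varphi$. First, fix the least $m \in \omega$ coding a $T$-proof of $\neg\varphi$; since the closed term $\neg(\gdl{\varphi})$ evaluates to $\gdl{\neg \varphi}$ and $\Prf_T(x,y)$ is primitive recursive (so $\PA$ decides each of its closed instances), we get
\[
  \PA \vdash \Prf_T(\neg(\gdl{\varphi}), \overline{m}).
\]
Secondly, since $T$ is consistent and $T \vdash \neg\varphi$, we have $T \nvdash \varphi$, so no number codes a $T$-proof of $\varphi$; in particular each $\Prf_T(\gdl{\varphi}, \overline{n})$ with $n < m$ is false, each such negation is $\PA$-provable, and hence
\[
  \PA \vdash \forall y < \overline{m}\, \neg \Prf_T(\gdl{\varphi}, y).
\]
It is important that only this \emph{bounded} statement is claimed: the unbounded $\forall y\, \neg\Prf_T(\gdl{\varphi}, y)$ need not be provable in $\PA$, but the Rosser clause $\forall z \leq y$ only ever requires the bounded form.

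Finally I would reason inside $\PA$: suppose $\Prf_T(\gdl{\varphi}, y)$ holds for some $y$; by the second display $y \not< \overline{m}$, hence $\overline{m} \leq y$, and by the first display $\Prf_T(\neg(\gdl{\varphi}), \overline{m})$, so $\exists z \leq y\, \Prf_T(\neg(\gdl{\varphi}), z)$. Discharging the assumption gives $\PA \vdash \forall y\,\bigl(\Prf_T(\gdl{\varphi}, y) \to \exists z \leq y\, \Prf_T(\neg(\gdl{\varphi}), z)\bigr)$, i.e.\ $\PA \vdash \neg \PR(\gdl{\varphi})$, and a fortiori $T \vdash \neg \PR(\gdl{\varphi})$. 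There is no genuinely hard step here; the only points to keep straight are that the consistency of $T$ is used purely at the meta-level (to pass from $T \vdash \neg\varphi$ to $T \nvdash \varphi$), and that the two auxiliary facts are positive or boundedly quantified, so that numeralwise provability in $\PA$ legitimately applies to them.
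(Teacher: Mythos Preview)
Your argument is correct and is exactly the standard proof of this well-known fact. The paper itself does not supply a proof of this proposition at all: it states the result and immediately notes the corollary $T \vdash \neg \PR(\gdl{0=1})$, treating the proposition as folklore. So there is no ``paper's own proof'' to compare against, but your write-up is the canonical verification and uses the conditions on $\Prf_T(x,y)$ (primitive recursiveness and agreement with $\Proof$ in $\mathbb{N}$) in precisely the intended way.
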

As a consequence of Proposition \ref{RPP}, we have $T \vdash \neg \PR(\gdl{0=1})$. 

Let ${\sf Pr}_T(x)$ be any provability predicate of $T$. 
A mapping $f$ from the set of all modal formulas to the set of all $\mathcal{L}_A$-sentences is said to be an {\it arithmetical interpretation based on ${\sf Pr}_T(x)$} if $f$ satisfies the following conditions: 
\begin{enumerate}
	\item $f(\bot)$ is $0 = 1$,
	\item $f$ commutes with each propositional connective,
	\item $f(\Box A)$ is ${\sf Pr}_T(\gdl{f(A)})$. 
\end{enumerate}

The set $\PL({\sf Pr}_T) = \{A$ : $A$ is a modal formula and for all arithmetical interpretations $f$ based on ${\sf Pr}_T(x)$, 
$T \vdash f(A)\}$ is called the {\it provability logic of ${\sf Pr}_T(x)$}. 
One of the major achievements of the investigation of provability logics is Solovay's arithmetical completeness theorem (see \cite{Boo93,Smo85,Sol76}). 

\begin{thm}[Solovay's arithmetical completeness theorem for $\GL$]
If $T$ is $\Sigma_1$-sound, then $\PL(\Prov) = \GL$. 
\end{thm}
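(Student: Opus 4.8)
The plan is to establish the two inclusions $\GL \subseteq \PL(\Prov)$ and $\PL(\Prov) \subseteq \GL$ separately, with $\Sigma_1$-soundness entering only in the second.

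For the soundness inclusion $\GL\subseteq\PL(\Prov)$, I would argue by induction on the length of a $\GL$-proof that $T\vdash f(A)$ for every theorem $A$ of $\GL$ and every arithmetical interpretation $f$ based on $\Prov(x)$; no hypothesis on $T$ is needed here. Propositional tautologies and modus ponens are immediate, and the substitution rule is harmless because the composition of an arithmetical interpretation with a modal substitution is again an arithmetical interpretation. The axiom $\Box(p\to q)\to(\Box p\to\Box q)$ is handled by the familiar derivability condition $\PA\vdash\Prov(\gdl{\varphi\to\psi})\to(\Prov(\gdl{\varphi})\to\Prov(\gdl{\psi}))$ recorded in the excerpt. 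The necessitation rule uses that $\Prov(x)$ is a provability predicate of $T$: if $T\vdash f(A)$ then $\PA\vdash\Prov(\gdl{f(A)})$, i.e.\ $T\vdash f(\Box A)$. Finally, the L\"ob axiom $\Box(\Box p\to p)\to\Box p$ is exactly an instance of the formalized L\"ob theorem, which follows from the three derivability conditions available for $\Prov(x)$ (in particular $\PA\vdash\Prov(\gdl{\varphi})\to\Prov(\gdl{\Prov(\gdl{\varphi})})$).

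For the completeness inclusion, suppose $A\notin\GL$. By the Kripke completeness theorem for $\GL$ (the analogue for $\GL$ of Theorem~\ref{KCKD}, proved with respect to finite transitive irreflexive frames) and by passing to the submodel generated by a node refuting $A$, I obtain a finite, transitive, irreflexive, rooted Kripke model $M=(W,\prec,\Vdash)$ with root $w_1$ and $w_1\not\Vdash A$. I then form the augmented frame $M^{+}$ on $W^{+}=W\cup\{w_0\}$ with a fresh point $w_0$ satisfying $w_0\prec x$ for all $x\in W$, and, using the recursion theorem, I define a primitive recursive \emph{Solovay function} $h\colon\omega\to W^{+}$ with $h(0)=w_0$ which at stage $n+1$ moves from $h(n)$ to a $\prec$-successor $j$ whenever a $T$-proof with code at most $n$ of the sentence ``$\lim_m h(m)\neq j$'' has appeared, and otherwise stays put; the term $\lim_m h(m)$ occurring in this clause is the very one being defined, so the definition is genuinely self-referential, and since $\prec$ is well founded on the finite set $W^{+}$, $h$ changes value only finitely often, so $\lim_m h(m)$ exists. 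Writing $\lambda_i$ for the sentence ``$\lim_m h(m)=i$'', the properties I would verify, all by reasoning inside $\PA$ about the effective definition of $h$ together with provable $\Sigma_1$-completeness, are: (i) $\PA\vdash\bigvee_{i\in W^{+}}\lambda_i$ and the $\lambda_i$ are pairwise incompatible over $\PA$; (ii) if $i\prec j$ then $\PA\vdash\lambda_i\to\neg\Prov(\gdl{\neg\lambda_j})$; (iii) for $i\in W$, both $\PA\vdash\lambda_i\to\Prov(\gdl{\neg\lambda_i})$ and $\PA\vdash\lambda_i\to\Prov(\gdl{\lambda_i\lor\bigvee_{i\prec j}\lambda_j})$.

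Next I would prove the truth lemma for the arithmetical interpretation $f$ based on $\Prov(x)$ defined by $f(p)=\bigvee\{\lambda_i : i\in W,\ i\Vdash p\}$: by induction on subformulas $B$ of $A$, for every $i\in W$, if $i\Vdash B$ then $\PA\vdash\lambda_i\to f(B)$, and if $i\not\Vdash B$ then $\PA\vdash\lambda_i\to\neg f(B)$. Atomic and propositional cases are routine given (i). For $B=\Box C$ with $i\Vdash\Box C$: every $\prec$-successor $j$ of $i$ forces $C$, so by the induction hypothesis $\PA\vdash\bigvee_{i\prec j}\lambda_j\to f(C)$; necessitating this and combining with the two clauses of (iii) yields $\PA\vdash\lambda_i\to\Prov(\gdl{f(C)})$, that is, $\PA\vdash\lambda_i\to f(\Box C)$. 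If $i\not\Vdash\Box C$, choose $j$ with $i\prec j$ and $j\not\Vdash C$; then $\PA\vdash f(C)\to\neg\lambda_j$, hence $\PA\vdash\Prov(\gdl{f(C)})\to\Prov(\gdl{\neg\lambda_j})$, while (ii) gives $\PA\vdash\lambda_i\to\neg\Prov(\gdl{\neg\lambda_j})$, so $\PA\vdash\lambda_i\to\neg f(\Box C)$. To finish, note $\PA\vdash\lambda_{w_1}\to\neg f(A)$ since $w_1\not\Vdash A$. Here $\Sigma_1$-soundness enters: I claim $\N\models\lambda_{w_0}$, i.e.\ $h$ never actually moves. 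If it did, $h$ would stabilize at some $i^{*}\in W$; the move into $i^{*}$ was triggered by a $T$-proof of $\neg\lambda_{i^{*}}$, and since $T$ also proves the true $\Sigma_1$ fact that $h$ reaches $i^{*}$ and proves the structural facts that $h$ thereafter only moves $\prec$-upward and converges, $T$ would prove $\bigvee_{i^{*}\prec j}\lambda_j$, hence $T$ would prove the $\Sigma_1$ sentence $\bigvee_{i^{*}\prec j}(\text{``}h\text{ reaches }j\text{''})$, which is false in $\N$ because $h$ stays at $i^{*}$, contradicting $\Sigma_1$-soundness (and if $i^{*}$ is $\prec$-terminal the disjunction is $\bot$, so mere consistency of $T$ already gives a contradiction). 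Granting $\N\models\lambda_{w_0}$, clause (ii) with $w_0\prec w_1$ gives $\N\models\neg\Prov(\gdl{\neg\lambda_{w_1}})$, i.e.\ $T+\lambda_{w_1}$ is consistent, and together with $T\vdash\lambda_{w_1}\to\neg f(A)$ this yields $T\not\vdash f(A)$, so $A\notin\PL(\Prov)$. Combining the two inclusions gives $\PL(\Prov)=\GL$. The main obstacle is the Solovay construction: setting up the self-referential definition of $h$ and carrying out the formalized-arithmetic verifications of (i)--(iii) inside $\PA$, which is the technical heart of the proof.
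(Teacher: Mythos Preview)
The paper does not contain a proof of this theorem: it is stated as background and attributed to the literature via the citations \cite{Boo93,Smo85,Sol76}, so there is no in-paper argument to compare against. Your sketch is the standard Solovay proof and is correct as an outline; in particular the soundness half, the self-referential definition of the Solovay function $h$, the Solovay lemmas (i)--(iii), the truth lemma, and the use of $\Sigma_1$-soundness to conclude $\N\models\lambda_{w_0}$ all match the classical presentation in the cited references.

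One small remark: in your completeness argument you may as well note that mere consistency of $T$ (rather than $\Sigma_1$-soundness) already suffices for the contradiction once $h$ has moved, since from $\lambda_{i^{*}}$ with $i^{*}\in W$ one gets $T\vdash\neg\lambda_{i^{*}}$ (by the trigger) together with $T\vdash\lambda_{i^{*}}$ (the true $\Sigma_1$ fact that $h$ reaches and stays at $i^{*}$ is $T$-provable), yielding inconsistency directly; $\Sigma_1$-soundness is only needed to pass from $\N\models\neg\Prov(\gdl{\neg\lambda_{w_1}})$ to $T\nvdash\neg\lambda_{w_1}$, which in fact follows already from the truth of the $\Pi_1$ statement $\neg\Prov(\gdl{\neg\lambda_{w_1}})$. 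But this is a matter of bookkeeping, not a gap.
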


Provability logics of nonstandard provability predicates have been also studied by many authors. 
Feferman \cite{Fef60} found a nonstandard $\Sigma_2$ provability predicate ${\sf Pr}_T^F(x)$ such that $\KD \subseteq \PL({\sf Pr}_T^F)$ (see also \cite{Mon78,Vis89}). 
Shavrukov \cite{Sha94} found a Feferman-like $\Sigma_2$ provability predicate whose provability logic is exactly $\KD + \Box p \to \Box((\Box q \to q) \lor \Box p)$. 
Also it was proved in \cite{Kur18,Kur} that for each $L \in \{{\sf K}\} \cup \{{\sf K} + \Box (\Box^n p \to p) \to \Box p : n \geq 2\}$, there exists a $\Sigma_2$ provability predicate ${\sf Pr}_T(x)$ of $T$ such that $\PL({\sf Pr}_T)$ is precisely $L$ (The modal logics ${\sf K} + \Box (\Box^n p \to p) \to \Box p$ for $n \geq 2$ were introduced by Sacchetti \cite{Sac01}).

In this paper, we are interested in the provability logics $\PL(\PR)$ of Rosser provability predicates $\PR(x)$. 
In particular, we study the situation where $\PL(\PR)$ is a normal modal logic. 
We introduce the following terminology. 

\begin{defn}
A Rosser provability predicate $\PR(x)$ of $T$ is {\it normal} if $\PL(\PR)$ is a normal modal logic. 
\end{defn}

It is easy to show the following proposition. 

\begin{prop}\label{KDP}
For any Rosser provability predicate $\PR(x)$ of $T$, the following are equivalent: 
\begin{enumerate}
	\item $\PR(x)$ is normal. 
	\item $\KD \subseteq \PL(\PR)$. 
	\item $T \vdash \PR(\gdl{\varphi \to \psi}) \to (\PR(\gdl{\varphi}) \to \PR(\gdl{\psi}))$ for all sentences $\varphi$ and $\psi$. 
\end{enumerate}
\end{prop}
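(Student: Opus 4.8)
The plan is to prove the equivalences $(1) \Leftrightarrow (2)$ and $(2) \Leftrightarrow (3)$ separately. The implication $(2) \Rightarrow (1)$ is immediate from the definition of normality: if $\KD \subseteq \PL(\PR)$, then in particular $\K \subseteq \PL(\PR)$, and $\PL(\PR)$ is always closed under modus ponens, necessitation, and substitution (closure under modus ponens and substitution is routine; closure under necessitation follows because $\PR(x)$ is a provability predicate of $T$, so $T \vdash f(A)$ implies $A$ is the G\"odel number of a $T$-theorem, hence $\PA \vdash \PR(\gdl{f(A)})$, i.e. $T \vdash f(\Box A)$). So $\PL(\PR)$ is normal. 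Conversely, for $(1) \Rightarrow (2)$: if $\PL(\PR)$ is normal then by definition $\K \subseteq \PL(\PR)$; and since $T \vdash \neg\PR(\gdl{0=1})$ holds for every Rosser provability predicate (this is recorded right after Proposition~\ref{RPP}), every arithmetical interpretation $f$ satisfies $T \vdash \neg\PR(\gdl{f(\bot)}) = f(\neg\Box\bot)$, so $\neg\Box\bot \in \PL(\PR)$; combining, $\KD = \K + \neg\Box\bot \subseteq \PL(\PR)$.

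For $(2) \Rightarrow (3)$: the formula $\Box(p \to q) \to (\Box p \to \Box q)$ is an axiom of $\K \subseteq \KD \subseteq \PL(\PR)$, so it lies in $\PL(\PR)$. Given sentences $\varphi, \psi$, choose an arithmetical interpretation $f$ based on $\PR(x)$ with $f(p) = \varphi$ and $f(q) = \psi$; then $f(\Box(p\to q) \to (\Box p \to \Box q))$ is exactly $\PR(\gdl{\varphi \to \psi}) \to (\PR(\gdl{\varphi}) \to \PR(\gdl{\psi}))$, which is therefore provable in $T$.

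For $(3) \Rightarrow (2)$: I will show, by induction on the construction of modal formulas, that every theorem of $\KD$ is in $\PL(\PR)$; equivalently, that for every arithmetical interpretation $f$ and every $\KD$-theorem $A$, $T \vdash f(A)$. The base cases are the axioms: propositional tautologies are mapped to arithmetical tautologies, hence $T$-provable; the axiom $\neg\Box\bot$ is handled by $T \vdash \neg\PR(\gdl{0=1})$ as above; and the axiom scheme $\Box(B \to C) \to (\Box B \to \Box C)$ is mapped, under $f$, to $\PR(\gdl{f(B) \to f(C)}) \to (\PR(\gdl{f(B)}) \to \PR(\gdl{f(C)}))$, which is $T$-provable precisely by hypothesis (3) applied to the sentences $\varphi = f(B)$ and $\psi = f(C)$. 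For the inference rules: modus ponens and substitution are immediate (substitution corresponds to composing with another interpretation); necessitation uses, as before, that $\PR(x)$ weakly represents $T$-provability, so $T \vdash f(B)$ yields $\PA \vdash \PR(\gdl{f(B)})$, i.e. $T \vdash f(\Box B)$.

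I do not anticipate any real obstacle here — this is the kind of ``folklore'' observation whose only content is bookkeeping. The one point that deserves a line of care is closure of $\PL(\PR)$ under necessitation, which is not automatic for arbitrary formulas but holds here because any provability predicate of $T$ (in the weak-representation sense fixed in the Preliminaries) satisfies: $n$ is the G\"odel number of a $T$-theorem $\Rightarrow$ $\PA \vdash \PR(\overline{n})$. Everything else is unwinding definitions, so the proof will be short.
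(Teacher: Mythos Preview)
Your argument is correct. The paper does not actually prove this proposition---it only asserts that it is ``easy to show''---and your write-up is precisely the routine verification one would supply: closure of $\PL(\PR)$ under modus ponens, substitution, and necessitation (the last using that $\PR$ weakly represents $T$-provability), together with $T \vdash \neg\PR(\gdl{0=1})$ from Proposition~\ref{RPP}, handles $(1)\Leftrightarrow(2)$; and the instance-by-instance check of the $\K$-axiom gives $(2)\Leftrightarrow(3)$. One trivial slip: in your necessitation clause you wrote ``$A$ is the G\"odel number of a $T$-theorem'' where you meant ``$f(A)$ is a $T$-theorem''---fix that and you are done.
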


We can define a Rosser provability predicate which is not normal by using modal logical results of Guaspari and Solovay \cite{GS79} or Shavrukov \cite{Sha91}. 
On the other hand, Arai \cite{Ara90} defined a normal Rosser provability predicate (This was also mentioned by Shavrukov \cite{Sha91}). 
Thus whether $\PR(x)$ is normal or not depends on the choice of $\PR(x)$. 
Model theoretic properties of normal Rosser provability predicates were investigated by Kikuchi and Kurahashi \cite{KK17}.

We say an $\mathcal{L}_A$-formula $\varphi$ is {\it propositionally atomic} if it is not a Boolean combination of proper subformulas of $\varphi$. 
We prepare a new propositional variable $p_\varphi$ for each propositionally atomic formula $\varphi$. 
Then there exists a primitive recursive injection $I$ from $\mathcal{L}_A$-formulas to propositional formulas satisfying the following conditions: 
\begin{enumerate}
	\item $I(\varphi) \equiv p_\varphi$ for each propositionally atomic $\varphi$, 
	\item $I$ commutes with every propositional connective. 
\end{enumerate}
Let $X$ be any finite set of $\mathcal{L}_A$-formulas. 
We say $X$ is {\em propositionally satisfiable} if the set $I(X) = \{I(\varphi) : \varphi \in X\}$ of propositional formulas is satisfiable. 
An $\mathcal{L}_A$-formula $\psi$ is said to be a {\em tautological consequence (t.c.) of $X$} if $I(\psi)$ is a tautological consequence of $I(X)$. 
The above definitions are formalized in $\PA$, and $\PA$ can prove several familiar facts about them. 
For instance, $\PA$ proves that ``If $X \cup \{\varphi\}$ is not propositionally satisfiable for a finite set $X$ of formulas and a formula $\varphi$, then $\neg \varphi$ is a t.c.~of $X$.''
Define $P_{T, n}$ to be the finite set $\{\varphi : \N \models \exists y \leq \overline{n}\, {\sf Proof}_T(\gdl{\varphi}, y)\}$ of formulas. 
Then $\PA$ proves ``If a formula $\varphi$ is a t.c.~of $P_{T, n}$ for some $n$, then $\varphi$ is provable in $T$'', and so on.

\section{The arithmetical completeness theorem for $\KD$}\label{Sec:ACT}

In this section, we prove that there exists a normal Rosser provability predicate of $T$ whose provability logic is exactly the modal logic $\KD$. 

\begin{thm}\label{ACT1}
There exists a Rosser provability predicate $\PRg(x)$ of $T$ such that the following conditions hold:
\begin{enumerate}
	\item \textup{(Arithmetical soundness)} For any modal formula $A$, if $\KD \vdash A$, then $T \vdash f(A)$ for any arithmetical interpretation $f$ based on $\PRg(x)$. 
	In particular, $\PRg(x)$ is normal. 
	\item \textup{(Uniform arithmetical completeness)} There exists an arithmetical interpretation $f$ based on $\PRg(x)$ such that for any modal formula $A$, $\KD \vdash A$ if and only if $T \vdash f(A)$. 
\end{enumerate}
\end{thm}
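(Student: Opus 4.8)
The plan is to build $\PRg(x)$ by an arithmetized Solovay-style construction, but driven by a Kripke model rather than a single model, so that both arithmetical soundness for $\KD$ and uniform completeness hold. Since $\KD$ is complete with respect to finite serial Kripke models (Theorem~\ref{KCKD}), I would fix an effective enumeration of all finite serial Kripke models, amalgamate them into one infinite (but still, in an appropriate sense, ``locally finite and serial'') frame with a distinguished root, and run a Solovay function $h$ on this frame. The key twist for Rosser predicates is that the proof predicate $\Prf_g(x,y)$ underlying $\PRg(x)$ must be defined \emph{together with} the Solovay function by a simultaneous fixed point: at stage $n$, having computed the current value of $h$, we decide for each relevant formula $\varphi$ whether to ``declare'' a $g$-proof of $\varphi$ (respectively of $\neg\varphi$) at that stage, using the set $P_{T,n}$ and tautological-consequence bookkeeping to guarantee that $\Prf_g$ remains a legitimate single-conclusion proof predicate of $T$ (conditions (1)--(4) in the definition), while simultaneously arranging that $\PRg(\gdl{\varphi})$ mimics, inside each world, the forcing relation $w \Vdash \Box(\text{corresponding modal formula})$.

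Concretely, I would let $f$ send each propositional variable $p$ to the arithmetical sentence ``$h$ is eventually in a world forcing $p$'' (the usual Solovay limit sentences), so that $f$ commutes with the connectives and $f(\Box A)$ is $\PRg(\gdl{f(A)})$ by construction. The core lemma, proved by the standard Solovay argument formalized in $\PA$, is: for each world $w$ and modal formula $A$, $w \Vdash A \Rightarrow T \vdash (\text{``$h$ limits to $w$''} \to f(A))$ and $w \not\Vdash A \Rightarrow T \vdash (\text{``$h$ limits to $w$''} \to \neg f(A))$; at the root $r$ (which forces nothing and lies in no model's image) one gets $T \vdash f(A)$ whenever $A$ is forced at every world of every serial model, i.e.\ whenever $\KD \vdash A$. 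For the converse direction of uniform completeness, if $\KD \nvdash A$ then by Theorem~\ref{KCKD} some finite serial model refutes $A$ at some world $w$; since that model is one of the amalgamated pieces and is reachable from $r$, the limit lemma gives $T \nvdash f(A)$ (using consistency of $T$ and that the sentence ``$h$ limits to $w$'' is consistent with $T$). Arithmetical soundness (clause~1) is immediate from the ``forced everywhere'' half of the limit lemma applied to arbitrary $f$, once one checks that every arithmetical interpretation based on $\PRg$ factors through the forcing semantics on the amalgamated frame — this is where normality of $\PRg$, equivalently clause (3) of Proposition~\ref{KDP}, must be verified directly from the construction: one shows $T \vdash \PRg(\gdl{\varphi\to\psi}) \to (\PRg(\gdl{\varphi}) \to \PRg(\gdl{\psi}))$ because a declared $g$-proof of $\varphi\to\psi$ and of $\varphi$ at a stage forces, via the tautological-consequence machinery, a declared $g$-proof of $\psi$ at the same stage, with no earlier $g$-proof of $\neg\psi$ because $\neg\psi$ would be refutable alongside $\{\varphi\to\psi,\varphi\}$.

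The main obstacle, and the delicate point, is reconciling the Rosser ``earliest-proof-wins'' clause with the Kripke-model simulation: the Solovay construction wants $\PRg(\gdl{f(A)})$ to behave like a $\Box$, but a Rosser predicate $\exists y(\Prf_g(x,y)\land \forall z\le y\,\neg\Prf_g(\neg(x),z))$ can fail to be provably closed under modus ponens unless one is very careful about the \emph{order} in which $g$-proofs of $\varphi$ and of $\neg\varphi$ are declared. I expect the heart of the proof to be the definition of $\Prf_g$ as a primitive recursive predicate that, at each stage, first copies enough of $\Proof_T$ to keep $\Prov$ and $\exists y\,\Prf_g(x,y)$ provably equivalent, then adds exactly the ``Rosser witnesses'' dictated by the current Solovay world, always adding a witness for $\varphi$ strictly before any witness for $\neg\varphi$ could appear when $\varphi$ is (the interpretation of) something forced — together with the verification, carried out inside $\PA$, that this scheduling makes $\PRg$ normal and that the single-conclusion and faithfulness conditions (1)--(4) survive. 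Everything else is a routine, if lengthy, adaptation of Solovay's lemma and of the arithmetized completeness bookkeeping already sketched in the Preliminaries via $P_{T,n}$ and tautological consequence.
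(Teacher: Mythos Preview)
Your proposal is essentially correct and follows the paper's approach: a Solovay-type function $h$ on the disjoint union of finite serial countermodels, a proof-enumerating function $g$ that copies $\Proof_T$ while $h=0$ and then, once $h$ jumps to some world $i$, outputs all formulas in an order governed by tautological consequence over a set $X$ built from $P_{T,m-1}$ and the ``successor'' sentence $\bigvee_{i\prec j}S(\overline{j})$, so that provably $\PRg(\gdl{\varphi})$ becomes equivalent to ``$\varphi$ is a t.c.\ of $X$''; the limit lemma then gives uniform completeness. One clarification: arithmetical soundness for \emph{arbitrary} interpretations does not come from the limit lemma (which concerns only the specific $f$ you build) nor from any ``factoring through the forcing semantics'', but simply from verifying $({\bf K})$ for $\PRg$ and invoking Proposition~\ref{KDP} --- the limit lemma is used only for clause~2.
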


Let $W = \omega \setminus \{0\}$. 
First, we define a primitive recursive function $h(x)$ by using the recursion theorem as follows: 
\begin{itemize}
	\item $h(0) = 0$, 
	\item $h(m+1) = \begin{cases} i & \text{if}\ h(m) = 0\ \&\ i \neq 0 \\
%	& \ \&\ \{j \in W : \neg S(\overline{j})\ \text{is a t.c.~of}\ P_{T, m}\} \neq \emptyset \\
	& \ \&\ i = \min \{j \in W : \neg S(\overline{j})\ \text{is a t.c.~of}\ P_{T, m}\},\\
			h(m) & \text{if $h(m) \neq 0$ or no $i$ as above exists.}
	\end{cases}$
\end{itemize}
Here $S(x)$ is the $\Sigma_1$ formula $\exists v(h(v) = x)$. 
The sentence $S(\overline{j})$ is propositionally atiomic because it is an existential sentence. 
Suppose that $P_{T, m}$ is propositionally satisfiable and $\neg S(\overline{j})$ is a t.c.~of $P_{T, m}$. 
Then there exists a formula $\varphi \in P_{T, m}$ containing $S(\overline{j})$ as a subformula. 
Hence $j \leq m$. 
Thus $\{j \in W : \neg S(\overline{j})\ \text{is a t.c.~of}\ P_{T, m}\} \neq \emptyset$ if and only if there exists $j \leq m$ such that $\neg S(\overline{j})$ is a t.c.~of $P_{T, m}$. 
Notice that this equivalence also holds when $P_{T, m}$ is not propositionally satisfiable. 
From this observation, for each $m$, whether $\{j \in W : \neg S(\overline{j})\ \text{is a t.c.~of}\ P_{T, m}\}$ is empty or not can be primitive recursively determined. 
This guarantees that $h$ is a primitive recursive function. 

By the definition of the function $h$, we obtain the following lemma. 

\begin{lem}\label{ACL1}\leavevmode
\begin{enumerate}
	\item $\PA \vdash \forall v \forall x (h(v) = x \land x \neq 0 \to \forall u \geq v\, h(u) = x)$. 
	\item $\PA \vdash \forall x \forall y(0 < x < y \land S(x) \to \neg S(y))$. 
	\item The sentences $\neg \Con_T$, $\exists x(\Prov(\gdl{\neg S(\dot{x})}) \land x \neq 0)$ and $\exists x (S(x) \land x \neq 0)$ are equivalent in $\PA$.
	\item For any $i \neq 0$, $T \nvdash \neg S(\overline{i})$. 
\end{enumerate}
\end{lem}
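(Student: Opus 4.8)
The plan is to verify the four parts of Lemma \ref{ACL1} essentially by unwinding the definition of $h$ and the associated $\Sigma_1$ formula $S(x)$, while being careful that all the reasoning is formalizable in $\PA$.

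\medskip

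For part (1), I would argue in $\PA$: if $h(v) = x$ with $x \neq 0$, then by inspection of the recursion clause, once $h$ takes a nonzero value it is never changed; formally, by induction on $u \geq v$ using the case split in the definition of $h(m+1)$ (the second clause applies whenever $h(m) \neq 0$), one gets $h(u) = x$ for all $u \geq v$. For part (2), suppose $0 < x < y$ and $S(x)$, i.e.\ $h(v) = x$ for some $v$. Since $x \neq 0$, part (1) gives $h(u) = x$ for all $u \geq v$; and since $h$ only ever changes value at most once (from $0$ to its final nonzero value), and $h(0) = 0$, the range of $h$ restricted to nonzero values is a singleton $\{x\}$. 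Hence no $w$ satisfies $h(w) = y$ when $y \neq x$, i.e.\ $\neg S(y)$. This ``$h$ has at most one nonzero value in its range'' fact is the real content here and should be isolated as an internal observation; it follows because the first clause of the recursion fires only when $h(m) = 0$, and thereafter the value is frozen.

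\medskip

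For part (3), I would show the chain of equivalences $\neg\Con_T \leftrightarrow \exists x(\Prov(\gdl{\neg S(\dot x)}) \land x \neq 0) \leftrightarrow \exists x(S(x) \land x \neq 0)$ in $\PA$. The direction from $\exists x(S(x)\land x\neq 0)$ to $\neg\Con_T$: if some nonzero $j$ enters the range of $h$, then by the definition of $h$ this happened because $\neg S(\overline j)$ was a t.c.\ of $P_{T,m}$ for some $m$; but then (as remarked in the text before the lemma, using the $\PA$-provable facts about t.c.\ of $P_{T,m}$) $T$ proves $\neg S(\overline j)$, so $\Prov(\gdl{\neg S(\dot x)})$ holds for that $x$, giving the middle statement; and since also $S(\overline j)$ is a true $\Sigma_1$ sentence hence $T$-provable, $T$ proves both $S(\overline j)$ and $\neg S(\overline j)$, so $\neg\Con_T$. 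Conversely, if $\neg\Con_T$, then $T$ is inconsistent, so it proves every sentence, in particular $\Prov(\gdl{0=1})$ is provable and, tracing through, $\neg S(\overline 1)$ becomes a t.c.\ of $P_{T,m}$ for some $m$ (once an inconsistency appears at stage $m$, $\neg S(\overline j)$ is a t.c.\ of $P_{T,m}$ for every $j \le m$, in particular for the least available $j \in W$), so $h$ takes value $\le m$, i.e.\ $\exists x(S(x) \land x \neq 0)$; and the middle statement follows from the middle-to-outer implications already established, or directly. One must set up these implications in a cycle so that all three are pairwise equivalent in $\PA$; the bookkeeping of which $\PA$-provable lemma about $P_{T,m}$ is used where is the fiddly part.

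\medskip

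For part (4), this is a statement \emph{about} $T$ (not inside $\PA$), and it is where I expect the main subtlety. Suppose toward a contradiction that $T \vdash \neg S(\overline i)$ for some $i \neq 0$. Then $\neg S(\overline i)$ is a t.c.\ of $P_{T,n}$ for some $n$ (indeed, it is $T$-provable, so $\neg S(\overline i)$ itself belongs to $P_{T,n}$ for large $n$). Looking at the definition of $h$: at stage $n$ (or the least such stage), either $h$ has already taken a nonzero value, or it now takes the value $\min\{j \in W : \neg S(\overline j) \text{ is a t.c.\ of } P_{T,n}\}$, which exists. Either way $h$ eventually takes some nonzero value $j_0$, so $S(\overline{j_0})$ is a true $\Sigma_1$ sentence, hence $T \vdash S(\overline{j_0})$. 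Now I need to derive a contradiction with the consistency of $T$. If $j_0 = i$ we are immediately done since $T \vdash \neg S(\overline i)$ too. If $j_0 \neq i$: by part (2) (applied in $\N$, or rather its true instance), from $S(\overline{j_0})$ we get that $\neg S(\overline i)$ holds in $\N$ anyway when, say, $j_0 < i$ — but that does not yet contradict $T$'s consistency. The cleaner route: the nonzero value $j_0$ actually taken by $h$ is, by construction, the \emph{minimal} $j$ with $\neg S(\overline j)$ a t.c.\ of $P_{T,m}$ at the decisive stage $m$; in particular, since $\neg S(\overline i)$ is a t.c.\ of $P_{T,n}$ and $P_{T,m} \supseteq$ the relevant set once $m$ is large, $j_0 \le i$. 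If $j_0 < i$, then note $S(\overline{j_0})$ is $T$-provable, but also — here is the key — the fact that $\neg S(\overline{j_0})$ was the t.c.\ that \emph{triggered} the assignment means $T$ proved $\neg S(\overline{j_0})$, contradiction. So the argument should be: $h$ taking nonzero value $j_0$ means $\neg S(\overline{j_0})$ was a t.c.\ of some $P_{T,m}$, hence $T \vdash \neg S(\overline{j_0})$; but $S(\overline{j_0})$ is true hence $T \vdash S(\overline{j_0})$; so $T$ is inconsistent, contradicting our standing assumption that $T$ is consistent. Therefore $h$ never takes a nonzero value, i.e.\ $S(\overline i)$ is false for all $i \neq 0$, so $T \nvdash \neg S(\overline i)$ would still need to be extracted — actually once $h$ is constantly $0$, $\neg S(\overline i)$ is true but I must rule out $T \vdash \neg S(\overline i)$: if it were, then $\neg S(\overline i) \in P_{T,n}$ for some $n$, so $\neg S(\overline i)$ is a t.c.\ of $P_{T,n}$, so $h$ would take a nonzero value $\le i$ by stage $n$ — contradicting that $h$ is constantly $0$. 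Hence $T \nvdash \neg S(\overline i)$. The main obstacle is precisely arranging this last part without circularity: one first shows (using consistency of $T$) that $h$ is identically $0$, and only then concludes $T \nvdash \neg S(\overline i)$; attempting to reason about provability in $T$ directly, without first pinning down the behavior of $h$, tangles the quantifiers.
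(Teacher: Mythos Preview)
Your arguments for parts 1--3 are correct and essentially match the paper's. (For part 3 the paper arranges the cycle as $\neg\Con_T \Rightarrow$ middle $\Rightarrow \exists x(S(x)\land x\neq 0) \Rightarrow \neg\Con_T$, which cleanly supplies the implication middle $\Rightarrow \exists x(S(x)\land x\neq 0)$ that you only gesture at with ``or directly''; but the content is the same, since that implication is precisely your argument for $\neg\Con_T \Rightarrow \exists x(S(x)\land x\neq 0)$ with an arbitrary $i$ in place of the specific $j=1$.)

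For part 4 your argument is correct but more convoluted than necessary, and the paper takes a shorter route. The paper simply observes: if $T \vdash \neg S(\overline{i})$ with $i \neq 0$, then $\exists x(\Prov(\gdl{\neg S(\dot x)}) \land x \neq 0)$ is a true $\Sigma_1$ sentence, hence $\PA$-provable; by the equivalence in part 3 this yields $\PA \vdash \neg \Con_T$, which (since $\neg\Con_T$ is $\Sigma_1$ and $\PA$ is sound) forces $T$ to be genuinely inconsistent, a contradiction. Your direct metatheoretic argument --- tracking the nonzero value $j_0$ that $h$ would actually take and deriving $T \vdash S(\overline{j_0})$ and $T \vdash \neg S(\overline{j_0})$ --- also works, but it amounts to redoing the $\exists x(S(x)\land x\neq 0) \Rightarrow \neg\Con_T$ direction of part 3 externally in $\N$ rather than reusing it. Note that your final paragraph (first establishing that $h$ is identically $0$, then arguing $T \nvdash \neg S(\overline i)$ from that) is superfluous: your first contradiction, ``$T$ is inconsistent, contradicting our standing assumption that $T$ is consistent'', already completes the proof by contradiction begun with ``Suppose toward a contradiction that $T \vdash \neg S(\overline i)$''.
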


\begin{proof}
1 is proved by induction in $\PA$. 
2 follows from 1 immediately. 

3. $\PA \vdash \neg \Con_T \to \exists x(\Prov(\gdl{\neg S(\dot{x})}) \land x \neq 0)$ is obvious. 

We prove $\PA \vdash \exists x(\Prov(\gdl{\neg S(\dot{x})}) \land x \neq 0) \to \exists x(S(x) \land x \neq 0)$. 
We work in $\PA$: 
Suppose $\neg S(\overline{i})$ is provable in $T$ for $i \neq 0$. 
Then $\neg S(\overline{i})$ is contained in $P_{T, m}$ for some $m$. 
In this case, $\neg S(\overline{i})$ is a t.c.~of $P_{T, m}$. 
Hence $\{j \in W : \neg S(\overline{j})\ \text{is a t.c.~of}\ P_{T, n}\}$ is not empty for some $n$. 
For the least such $n$, $h(n + 1) \neq 0$. 

We prove $\PA \vdash \exists x(S(x) \land x \neq 0) \to \neg \Con_T$. 
We reason in $\PA$: 
Suppose $S(i)$ holds and $i \neq 0$. 
Then there exists $m$ such that $h(m) = 0$ and $h(m + 1) = i$. 
In this case, $i = \min \{j \in W : \neg S(\overline{j})\ \text{is a t.c.~of}\ P_{T, m}\}$, and hence $\neg S(\overline{i})$ is a t.c.~of $P_{T, m}$. 
Then $\neg S(\overline{i})$ is provable in $T$. 
On the other hand, the sentence $S(\overline{i})$ is provable in $T$ because it is a true $\Sigma_1$ sentence. 
Therefore $T$ is inconsistent. 

4. Suppose $T \vdash \neg S(\overline{i})$ for some $i \neq 0$. 
Then $\PA \vdash \exists x(\Prov(\gdl{\neg S(\dot{x})}) \land x \neq 0)$. 
By 3, $\PA \vdash \neg \Con_T$. 
This contradicts the consistency of $T$. 
Thus $T \nvdash \neg S(\overline{i})$. 
\end{proof}

Let $\{A_k\}_{k \in \omega}$ be a primitive recursive enumeration of all modal formulas that are not provable in $\KD$. 
From each $A_k$, we can primitive recursively construct a finite serial Kripke model $M_k = (W_k, \prec_k, \Vdash_k)$ in which $A_k$ is not valid by Theorem \ref{KCKD}. 
We may assume that $W_k$ and $W_l$ for $k \neq l$ are pairwise disjoint sets of natural numbers and $\bigcup_{k \in \omega} W_k = \omega \setminus \{0\} = W$. 
We define an infinite Kripke model $M = (W, \prec, \Vdash)$ which can be primitive recursively represented in $\PA$ as follows: 
\begin{enumerate}
%	\item $W = \omega \setminus \{0\}$, 
	\item $x \prec y$ if and only if for some $k \in \omega$, $x, y \in W_k$ and $x \prec_k y$, 
	\item $x \Vdash p$ if and only if for some $k \in \omega$, $x \in W_k$ and $x \Vdash_k p$. 
\end{enumerate}

For each $i \in W$, the set $\{j \in W : i \prec j\}$ is finite and nonempty because the Kripke model $M$ is a disjoint union of finite serial Kripke models. 
We can use the sentence $\bigvee_{i \prec j} S(\overline{j})$ which contains at least one disjunct. 

We define a primitive recursive function $g(x)$ which enumerates all theorems of $T$. 
The definition of $g$ consists of Procedures 1 and 2. 
The definition starts with Procedure 1 and the values of $g(0), g(1), \ldots$ are defined by referring to the values of the function $h$ in stages. 
The first time $h(m+1) \neq 0$, the construction of $g$ is switched to Procedure 2. 
In Procedure 2, $g$ outputs all formulas in stages. 

We start defining the function $g$. 
In the definition of $g$, we identify each formula with its G\"odel number. 

\vspace{0.1in}
Procedure 1. \\
Stage $1.m$: 
\begin{itemize}
		\item If $h(m + 1) = 0$, then
\[
	g(m) = \begin{cases}  \varphi & \text{if}\ m\ \text{is a proof of}\ \varphi\ \text{in} \ T,\ \text{that is},\ {\sf Proof}_T(\gdl{\varphi}, m)\ \text{holds}, \\
			0 & m\ \text{is not a proof of any formula in}\ T.
	\end{cases}
\]
	Go to Stage $1.(m+1)$. 
	\item If $h(m+1) \neq 0$, then go to Procedure 2. 
\end{itemize}

Procedure 2. \\
Let $m$ be the smallest number such that $h(m+1) \neq 0$. 
Let $i = \min \{j \in W : \neg S(\overline{j})\ \text{is a t.c.~of}\ P_{T, m}\}$ and let $X$ be the finite set $P_{T, m - 1} \cup \left\{\bigvee_{i \prec j} S(\overline{j})\right\}$ of formulas. 
Then $i = h(m+1)$. 
Let $\{\varphi_k\}_{k \in \omega}$ be the sequence of all formulas introduced in Section \ref{Sec:Pre}. 

We define the values of $g(m), g(m+1), \ldots$ and the numbers $\{t_k\}_{k \in \omega}$ simultaneously in stages. 
Let $t_0 = 0$. 
\\
Stage $2.k$: We distinguish the following three cases {\bf C1}, {\bf C2} and {\bf C3}. 
\begin{description}
	\item [C1] If $\varphi_k$ is a t.c.~of $X$, then let $g(m + t_k) = \varphi_k$ and $t_{k+1} = t_k + 1$.
	\item [C2] If $\varphi_k$ is not a t.c.~of $X$ and $\neg \varphi_k$ is a t.c.~of $X$, then let $g(m + t_k) = \neg \varphi_k$, $g(m + t_k + 1) = \varphi_k$ and $t_{k+1} = t_k + 2$. 
	\item [C3] If neither $\varphi_k$ nor $\neg \varphi_k$ is a t.c.~of $X$, then for each $0 \leq s \leq m$, let $g(m + t_k + s) = \overbrace{\neg \cdots \neg}^{m - s}\varphi_k$ and $t_{k+1} = t_k + m + 1$. 
\end{description}
Go to stage $2.(k+1)$.

The definition of $g$ has just been finished. 
Let $\Prf_g(x, y)$ be the primitive recursive formula $x = g(y) \land {\sf Fml}(x)$, where ${\sf Fml}(x)$ is a natural primitive recursive representation of ``$x$ is an $\mathcal{L}_A$-formula''. 
Also let ${\sf Pr}_g(x)$ and $\PRg(x)$ be the formula $\exists y \Prf_g(x, y)$ and the Rosser provability predicate of $\Prf_g(x, y)$, respectively. 
Actually, our formula $\Prf_g(x, y)$ is a proof predicate of $T$. 

\begin{lem}\label{ACL2}\leavevmode
\begin{enumerate}
	\item $\PA \vdash \forall x(\Prov(x) \leftrightarrow {\sf Pr}_g(x))$. 
	\item For any $n \in \omega$ and formula $\varphi$, $\N \models \Proof(\gdl{\varphi}, \overline{n}) \leftrightarrow \Prf_g(\gdl{\varphi}, \overline{n})$. 
\end{enumerate}
\end{lem}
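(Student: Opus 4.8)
The plan is to prove the two clauses of Lemma~\ref{ACL2} by carefully unwinding the definition of the function $g$ and exploiting Lemma~\ref{ACL1}. The key conceptual point is that $g$ and the standard proof search $\Proof$ agree exactly as long as we are in Procedure~1 (i.e.\ as long as $h$ has not yet left $0$), and that the switch to Procedure~2 can happen only when $T$ is inconsistent.

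For clause~2 (the statement about the standard model), I would argue as follows. By Lemma~\ref{ACL1}(4) we have $T \nvdash \neg S(\overline{i})$ for every $i \neq 0$, and by the definition of $h$ this means that in the true world no $i$ as in the defining clause for $h(m+1)$ ever exists; hence $\N \models h(m+1) = 0$ for all $m$, and the construction of $g$ stays in Procedure~1 forever (in the standard model). In Procedure~1 at stage $1.m$ we set $g(m) = \varphi$ precisely when $\Proof(\gdl{\varphi}, \overline{m})$ holds, and $g(m) = 0$ otherwise; since $0$ is not the G\"odel number of any formula, $\Prf_g(\gdl{\varphi}, \overline{m}) \equiv (\gdl{\varphi} = g(\overline m) \wedge {\sf Fml}(\gdl{\varphi}))$ holds iff $g(m) = \gdl{\varphi}$ iff $\Proof(\gdl{\varphi}, \overline m)$. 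This gives clause~2.

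For clause~1 (provable equivalence in $\PA$), I would reason inside $\PA$ and split on the two cases furnished by Lemma~\ref{ACL1}(3): either $\Con_T$ holds or it does not. If $\Con_T$ holds, then by Lemma~\ref{ACL1}(3) we get $\neg\exists x(S(x)\wedge x\neq 0)$, so $\forall m\, h(m+1)=0$, the construction never enters Procedure~2, and exactly as above $\forall x(\Prov(x)\leftrightarrow {\sf Pr}_g(x))$ follows from the provable equivalence $\Prf_g(\gdl\varphi,\overline m)\leftrightarrow\Proof(\gdl\varphi,\overline m)$ formalized in $\PA$. If $\neg\Con_T$, then I would show both implications become trivial: ${\sf Pr}_g(x)$ holds for every formula $x$ (because Procedure~2, once entered, is designed so that $g$ eventually outputs every formula --- every $\varphi_k$ is handled by one of {\bf C1}, {\bf C2}, {\bf C3}, and in all three cases $\varphi_k$ itself appears as a value of $g$; here I would use that when $T$ is inconsistent, $X$ is propositionally unsatisfiable so {\bf C1} or {\bf C2} applies and in fact every formula is a t.c.\ of $X$), and likewise $\Prov(x)$ holds for every formula $x$ since $T$ proves everything; so the biconditional holds vacuously. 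Combining the two cases inside $\PA$ yields clause~1.

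The main obstacle I anticipate is the careful bookkeeping in the inconsistent case: one must check inside $\PA$ that Procedure~2, as written with the offsets $t_k$ and the $m{+}1$ many outputs in case {\bf C3}, really is a total primitive recursive enumeration that lists every formula (so that ${\sf Pr}_g$ becomes the trivial predicate), and that the formula $\Prf_g(x,y)$ still satisfies clause~2 of the definition of proof predicate. None of this is deep, but it requires threading the definition of $g$ through a $\PA$-formalization and being careful that the ``switch point'' $m$ is handled correctly (note Procedure~2 refers to $P_{T,m-1}$ while the case split for $h$ refers to $P_{T,m}$). The rest --- clause~2 and the $\Con_T$ branch of clause~1 --- is a routine unfolding of definitions once Lemma~\ref{ACL1}(3),(4) are in hand.
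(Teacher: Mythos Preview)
Your approach is essentially the paper's: split inside $\PA$ on whether $\exists x(S(x)\wedge x\neq 0)$ holds (equivalently $\neg\Con_T$, by Lemma~\ref{ACL1}.3), observe that in the negative case $g$ coincides with standard proof search, and in the positive case Procedure~2 outputs every formula so ${\sf Pr}_g$ collapses to ${\sf Fml}$; clause~2 follows since the negative case holds in $\N$.

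One correction, though: your parenthetical claim that ``when $T$ is inconsistent, $X$ is propositionally unsatisfiable'' is wrong. The set $X$ is $P_{T,m-1}\cup\{\bigvee_{i\prec j}S(\overline j)\}$ where $m$ is the \emph{first} stage at which some $\neg S(\overline j)$ becomes a t.c.\ of $P_{T,m}$; nothing forces $P_{T,m-1}$ to already witness inconsistency, and indeed the paper proves (Lemma~\ref{ACL2.5}.2) that $X$ is always propositionally satisfiable. Fortunately you do not need this shortcut: your own preceding observation that each of {\bf C1}, {\bf C2}, {\bf C3} outputs $\varphi_k$ (in {\bf C3} take $s=m$, giving zero prepended negations) already shows ${\sf Pr}_g(x)\leftrightarrow{\sf Fml}(x)$ under $\neg\Con_T$, and that is exactly what the paper uses. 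Drop the unsatisfiability remark and the argument is complete.
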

\begin{proof}
1. It is clear that $\neg \exists x(S(x) \land x \neq 0) \to \forall x(\Prov(x) \leftrightarrow {\sf Pr}_g(x))$ is proved in $\PA$ by the definition of $g$. 
Also $\PA \vdash \exists x(S(x) \land x \neq 0) \to \forall x({\sf Pr}_g(x) \leftrightarrow {\sf Fml}(x))$ because each formula $\varphi_k$ is output at Stage $2.k$ in Procedure 2. 
Since $\PA \vdash \neg \Con_T \to \forall x(\Prov(x) \leftrightarrow {\sf Fml}(x))$, we have $\PA \vdash \exists x(S(x) \land x \neq 0) \to \forall x(\Prov(x) \leftrightarrow {\sf Pr}_g(x))$ by Lemma \ref{ACL1}.3. 
Thus we obtain $\PA \vdash \forall x(\Prov(x) \leftrightarrow {\sf Pr}_g(x))$. 

2. By Lemma \ref{ACL1}.3, $\neg \exists x(S(x) \land x \neq 0)$ is true in $\N$. 
Then $\Proof(\gdl{\varphi}, \overline{n})$ and $\Prf_g(\gdl{\varphi}, \overline{n})$ are equivalent in $\N$ by the definition of $g$. 
\end{proof}

\begin{lem}\label{ACL2.5}
Let $\psi$ be a formula. 
Then the following statement is provable in $\PA$: \\
``Let $i$ and $m$ be such that $h(m) = 0$ and $h(m+1) = i$. 
Let $X = P_{T, m-1} \cup \left\{\bigvee_{i \prec j}S(\overline{j})\right\}$. 
\begin{enumerate}
	\item For each $j_0 \succ i$, $\neg S(\overline{j_0})$ is not a t.c.~of $X$. 
	\item $X$ is propositionally satisfiable. 
	\item If a formula $\varphi$ is a t.c.~of $X$, then $\PRg(\gdl{\varphi})$ holds. 
	\item If $\psi$ is not a t.c.~of $X$, then $\neg \PRg(\gdl{\psi})$ holds. ''
\end{enumerate}
\end{lem}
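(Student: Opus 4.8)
The plan is to work entirely inside $\PA$, reasoning about the finite set $X = P_{T,m-1} \cup \{\bigvee_{i \prec j} S(\overline{j})\}$ under the hypothesis $h(m) = 0$, $h(m+1) = i$. Note first that this hypothesis forces $i \neq 0$ and, by the definition of $h$, that $i = \min\{j \in W : \neg S(\overline{j})\text{ is a t.c.~of }P_{T,m}\}$; moreover, since $h(m) = 0$, we have $S(\overline{j})$ false for every $j \in W$ with $j \leq m$ except possibly those appearing among the $h$-values, and in fact $\N$-provably (and $\PA$-provably under our hypothesis) the only $j \in W$ with $S(\overline{j})$ is $j = i$ — more precisely, Lemma \ref{ACL1}.2 gives that at most one positive value is taken by $h$, and here that value is $i$. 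This is the structural fact that drives all four parts.

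For part 1, suppose toward a contradiction (inside $\PA$) that $\neg S(\overline{j_0})$ is a t.c.~of $X$ for some $j_0 \succ i$. Since $\bigvee_{i \prec j} S(\overline{j})$ is one of the formulas in $X$ and the remaining formulas $P_{T,m-1}$ are all $T$-provable (hence, being built before stage $m$, do not mention $S(\overline{j})$ for $j$ near $i$ in a way that settles them — one uses here that $P_{T,m-1} \subseteq P_{T,m}$ and that $i$ was the \emph{least} witness at stage $m$, so no $\neg S(\overline{j})$ with $j \le i$, in particular no $\neg S(\overline{j_0})$ with $j_0 \succ i \ge i$... careful: $j_0$ could be larger than $i$), I would argue as follows. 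From $\neg S(\overline{j_0})$ being a t.c.~of $X = P_{T,m-1} \cup \{\bigvee_{i \prec j}S(\overline{j})\}$, propositional reasoning gives that $\neg S(\overline{j_0})$ is a t.c.~of $P_{T,m-1} \cup \{S(\overline{j})\}$ for each $j$ with $i \prec j$; taking $j = j_0$ (which is legitimate since $j_0 \succ i$), we get that $\neg S(\overline{j_0})$ is a t.c.~of $P_{T,m-1} \cup \{S(\overline{j_0})\}$, hence $\neg S(\overline{j_0})$ is already a t.c.~of $P_{T,m-1}$ (as $S(\overline{j_0}) \land \neg S(\overline{j_0})$ is propositionally refutable, the hypothesis $S(\overline{j_0})$ is vacuous). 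But $P_{T,m-1} \subseteq P_{T,m}$, so $\neg S(\overline{j_0})$ is a t.c.~of $P_{T,m}$, making $j_0$ a member of $\{j \in W : \neg S(\overline{j})\text{ is a t.c.~of }P_{T,m}\}$; since $i$ is the minimum of this set, $i \leq j_0$, which is consistent, so I instead need the sharper observation that $\neg S(\overline{j_0})$ a t.c.~of $P_{T,m-1}$ would have triggered $h(m) \neq 0$ already (as $j_0 \in W$ and $\neg S(\overline{j_0})$ is a t.c.~of $P_{T,m-1}$ means the min at stage $m-1$ is nonempty), contradicting $h(m) = 0$. That is the contradiction.

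For part 2, propositional satisfiability of $X$: since $h(m) = 0$, no $\neg S(\overline{j})$ for $j \in W$ is a t.c.~of $P_{T,m-1}$, so $P_{T,m-1}$ is consistent with $S(\overline{j})$ for every $j \in W$; pick any $j^*$ with $i \prec j^*$ (the seriality of $M$ guarantees one exists, and the disjunction $\bigvee_{i \prec j}S(\overline{j})$ is nonempty) and extend a propositional valuation satisfying $I(P_{T,m-1})$ by setting $p_{S(\overline{j^*})}$ true; this satisfies $I(X)$. (One must check $I$ treats each $S(\overline{j})$ as an independent atom, which holds since $S(\overline{j})$ is propositionally atomic, being existential.) For part 3, if $\varphi$ is a t.c.~of $X = P_{T,m-1} \cup \{\bigvee_{i\prec j}S(\overline{j})\}$, I trace the construction of $g$: the remark preceding Procedure 2 and the definition at Procedure 2 use exactly the set $P_{T,m-1} \cup \{\bigvee_{i \prec j}S(\overline{j})\}$ (here I must reconcile that the excerpt's Procedure 2 writes $X = P_{T,m-1} \cup \{\bigvee_{i\prec j}S(\overline{j})\}$ — actually it writes $P_{T,m-1}$, matching this lemma), so $\varphi$, being $\varphi_k$ for some $k$, falls under case {\bf C1} at stage $2.k$ and is output as a single value $g(m+t_k) = \varphi$, and moreover its negation is output (if at all) only \emph{later}; combined with part 1 (which ensures no $\neg S(\overline{j_0})$, $j_0 \succ i$, is output early) and Lemma \ref{ACL1}, one checks the Rosser condition: there is a $g$-proof of $\varphi$ with no earlier $g$-proof of $\neg\varphi$, so $\PRg(\gdl{\varphi})$ holds. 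For part 4, if $\psi$ is not a t.c.~of $X$, then $\psi = \varphi_k$ for some $k$ and stage $2.k$ falls under {\bf C2} or {\bf C3}: in {\bf C2}, $\neg\psi$ is output at $g(m+t_k)$ strictly before $\psi$ at $g(m+t_k+1)$; in {\bf C3}, the padded sequence $\neg\cdots\neg\psi$ outputs $\neg\psi$ (the $s = m-1$ entry, or more precisely some entry equal to $\neg\varphi_k$) before $\psi$ itself (the $s=m$ entry). Either way there is an earlier $g$-proof of $\neg\psi$ than of $\psi$, so $\neg\PRg(\gdl{\psi})$ holds — and one must also rule out that $\psi$ was output earlier in Procedure 1 or in an earlier stage $2.k'$, which follows since Procedure 1 before stage $m$ only outputs genuine $T$-theorems $\ne \psi$ when $\psi$ is not one, and the $\{\varphi_k\}$ enumeration is repetition-free so no earlier $\varphi_{k'}$ equals $\psi$; the residual case $\psi$ itself $T$-provable needs the hypothesis tying back to $P_{T,m-1} \subseteq X$.

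The main obstacle I anticipate is part 4 together with the bookkeeping of \emph{which proof comes first} in $g$: one must carefully verify, provably in $\PA$, that for a non-t.c.\ $\psi$ the $g$-output of $\neg\psi$ genuinely precedes that of $\psi$, accounting for the possibility that $\psi$ or $\neg\psi$ was already output in Procedure 1 (as an actual $T$-theorem) before the switch to Procedure 2, and for the case where $\psi$ is $T$-provable yet not a t.c.\ of $X$ — here one leans on $P_{T,m-1} \subseteq X$ and the fact that any $T$-proof of $\psi$ short enough to appear in Procedure 1 would put $\psi \in P_{T,m-1}$, hence make $\psi$ a t.c.\ of $X$, contrary to hypothesis. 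Formalizing these case distinctions and the "first occurrence" comparisons inside $\PA$ is the delicate part; parts 1–3 are comparatively routine propositional and arithmetical bookkeeping once the structural fact ``$i$ is the unique positive $h$-value and $h(m)=0$ means nothing was triggered before stage $m$'' is in hand.
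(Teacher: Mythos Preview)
Your approach matches the paper's in outline, and parts 1 and 2 are essentially the paper's argument. The gaps are in parts 3 and 4, where the ``first occurrence'' bookkeeping you flag as delicate is genuinely more involved than what you sketch.

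For part 4, your justification that $\psi$ is not output before Stage~$2.k$ --- ``the $\{\varphi_k\}$ enumeration is repetition-free so no earlier $\varphi_{k'}$ equals $\psi$'' --- only excludes $\psi$ appearing as the {\bf C1}-output or as the second {\bf C2}-output at an earlier stage. It does \emph{not} exclude $\psi$ appearing as $\neg\varphi_{k'}$ (the first {\bf C2}-output) or as some $\neg^{\,m-s}\varphi_{k'}$ in a {\bf C3} block with $k' < k$. The paper handles this by splitting into two cases according to whether $\neg\psi$ is a t.c.\ of $X$. In the {\bf C2} case it shows that every formula $\varphi_l$ obtained by stripping leading negations from $\psi$ has one of $\varphi_l,\neg\varphi_l$ a t.c.\ of $X$, so {\bf C3} never fires at those earlier stages. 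In the {\bf C3} case the paper does \emph{not} take $k$ to be the index of $\psi$ itself; instead it takes $\varphi_k$ to be $\psi$ with \emph{all} leading $\neg$'s deleted, so that no $\varphi_p$ with $p<k$ can have $\psi$ as a $\neg$-extension, and hence Stage~$2.k$ really is the first place $\psi$ appears.

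The second missing ingredient in your {\bf C3} analysis is the bound $m>n$, where $n$ is the number of leading $\neg$'s of $\psi$. With the paper's choice of $k$, one has $\psi=\neg^{\,n}\varphi_k$, so $\psi$ is output at $s=m-n$ and $\neg\psi$ at $s=m-n-1$; this requires $n<m$. The paper obtains this from the externality of $\psi$: since $n$ is standard, Lemma~\ref{ACL1}.4 gives (as a true $\Delta_0$ fact, hence $\PA$-provably) that no $\neg S(\overline{j})$ is a t.c.\ of $P_{T,n}$, whence $h(n+1)=0$, and therefore $m\neq n$; iterating, $m>n$. This is why the lemma fixes the formula $\psi$ \emph{outside} the quoted $\PA$-statement. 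You do not invoke this anywhere, and without it the {\bf C3} block need not be long enough to place $\neg\psi$ ahead of $\psi$.

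Part 3 has the analogous omission: the claim ``its negation is output (if at all) only later'' needs the same three-way check ({\bf C1}/{\bf C2}/{\bf C3} at earlier stages) that the paper carries out; your appeal to part~1 is not what is used --- part~2 (satisfiability of $X$) gives $\neg\varphi\notin P_{T,m-1}$, and then one checks that stripping $\neg$'s from $\varphi$ always lands in {\bf C1} or {\bf C2}.
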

\begin{proof}
We reason in $\PA$. 
Let $i$, $m$ and $X$ be as in the lemma. 
Then $i = \min \{j \in W : \neg S(\overline{j})\ \text{is a t.c.~of}\ P_{T, m}\}$. 
%Let $X = P_{T, m-1} \cup \left\{\bigvee_{i \prec j}S(\overline{j})\right\}$. 

1. Suppose that $\neg S(\overline{j_0})$ is a t.c.~of $X$ for some $j_0 \succ i$. 
Then $\bigvee_{i \prec j}S(\overline{j}) \to \neg S(\overline{j_0})$ is a t.c.~of $P_{T, m-1}$. 
Since $S(\overline{j_0}) \to \bigvee_{i \prec j}S(\overline{j})$ is a tautology, $S(\overline{j_0}) \to \neg S(\overline{j_0})$ is also a t.c.~of $P_{T, m-1}$. 
This means $\neg S(\overline{j_0})$ is a t.c.~of $P_{T, m-1}$. 
Then $\{j \in W : \neg S(\overline{j})$ is a t.c.~of $P_{T, m-1}\}$ is not empty, and hence $h(m) \neq 0$. 
This is a contradiction. 
Therefore $\neg S(\overline{j_0})$ is not a t.c.~of $X$. 

2. Since $\prec$ is serial, there exists at least one $j_0$ such that $j_0 \succ i$. 
Then $X \cup \{S(\overline{j_0})\}$ is propositionally satisfiable by 1. 
It follows that $X$ is propositionally satisfiable. 

3. Suppose $\varphi$ is a t.c.~of $X$. 
For the enumeration $\{\varphi_k\}_{k \in \omega}$ used in the definition of $g$, let $\varphi = \varphi_k$. 
Then $g(m + t_k) = \varphi$ by {\bf C1}. 
We prove that $g$ does not output $\neg \varphi$ before Stage $2.k$. 
Since $X$ is propositionally satisfiable by 2, $\neg \varphi \notin X$. 
Hence $\neg \varphi \notin P_{T, m-1}$. 
Therefore $\neg \varphi \notin \{g(0), \ldots, g(m-1)\}$ because the construction of $g$ executes Procedure 1 before Stage $1.m$. 

Since for each $k' < k$, $\neg \varphi = \neg \varphi_k$ is neither $\varphi_{k'}$ nor $\neg \varphi_{k'}$, $\neg \varphi$ is not output via ${\bf C1}$ nor via ${\bf C2}$ before Stage $2.k$. 
Also let $\varphi_l$ be a formula obtained by deleting zero or more leading negation symbols $\neg$ from $\varphi$. 
Then $l \leq k$ and exactly one of $\varphi_l$ and $\neg \varphi_l$ is a t.c.~of $X$. 
Thus $\neg \varphi$ is not output via ${\bf C3}$ at Stage $2.l$. 

Therefore $\neg \varphi \notin \{g(m), \ldots, g(m + t_k)\}$, and thus $\PRg(\gdl{\varphi})$ holds. 

4. Suppose that $\psi$ is not a t.c.~of $X$. 
Then $\psi \notin \{g(0), \ldots, g(m-1)\}$ since $\psi$ is not contained in $X$. 
We distinguish the following two cases. 
\begin{itemize}
	\item Case 1: $\neg \psi$ is a t.c.~of $X$. 
	Let $\psi = \varphi_k$. 
	Then $g(m + t_k) = \neg \psi$ by {\bf C2}. 
	We prove that $\psi$ is not output before Stage $2.k$. 

	Since $\psi \neq \varphi_{k'}$ for all $k' < k$, $\psi$ is not output by {\bf C1} before Stage $2.k$. 
	If $\psi$ is not a negated formula, then $\psi$ is not output via {\bf C2} nor via {\bf C3} before Stage $2.k$. 

	We assume that $\psi$ is of the form $\neg \varphi_l$ for some $l < k$. 
	Then $\varphi_l$ is a t.c.~of $X$. 
	Hence $g(m + t_l) = \varphi_l$ by {\bf C1}, and $\psi$ is not output at Stage $2.l$. 
	For each $l' < l$, $\psi = \neg \varphi_l$ is neither $\varphi_{l'}$ nor $\neg \varphi_{l'}$, so $\psi$ is not output by {\bf C2} before Stage $2.l$. 
	Also let $\varphi_p$ be a formula obtained by deleting zero or more leading $\neg$'s from $\varphi_l$, then $p \leq l$ and exactly one of $\varphi_p$ and $\neg \varphi_p$ is a t.c.~of $X$. 
	Hence $g$ does not output $\psi$ by {\bf C3} at Stage $2.p$. 
	Thus $\psi$ is not output via {\bf C2} nor via {\bf C3} before Stage $2.k$.

	We conclude that $\psi \notin \{g(m), \ldots, g(m + t_k)\}$. 
	Therefore $\PRg(\gdl{\psi})$ does not hold. 
	
	\item Case 2: $\neg \psi$ is not a t.c.~of $X$. 
	Let $\varphi_k$ be the formula obtained by deleting all leading $\neg$'s from $\psi$. 
	Then $\psi$ cannot be obtained by adding $\neg$'s to $\varphi_p$ for all $p < k$. 			Hence $g$ does not output $\psi$ before Stage $2.k$ by the definition of $g$. 
	Since neither $\varphi_k$ nor $\neg \varphi_k$ is a t.c.~of $X$, $g(m + t_k + s) = \overbrace{\neg \cdots \neg}^{m - s}\varphi_k$ for $0 \leq s \leq m$ by {\bf C3}. 
	Let $n$ be the number of deleted negation symbols from $\psi$. 
	Notice that $\neg S(\overline{i})$ is not a t.c.~of $P_{T, n}$ by Lemma \ref{ACL1}.4 (because $n$ is standard). 
	Hence $m > n$ holds. 
	Thus for $s = m - n - 1$, $g(m + t_k + s) = \neg \psi$ and $g(m + t_k + s + 1) = \psi$. 
	Therefore $\PRg(\gdl{\psi})$ does not hold. 
\end{itemize}
\end{proof}

\begin{lem}\label{ACL3}Let $i, k \in W$ and suppose $i \prec k$. 
\begin{enumerate}
	\item $\PA \vdash S(\overline{i}) \to \PRg \left(\gdl{\bigvee_{i \prec j}S(\overline{j})}\right)$. 
	\item $\PA \vdash S(\overline{i}) \to \neg \PRg(\gdl{\neg S(\overline{k})})$. 
\end{enumerate}
\end{lem}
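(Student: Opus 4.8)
The plan is to derive the lemma directly from Lemmas~\ref{ACL1} and~\ref{ACL2.5}; no new construction is needed, only the extraction from the hypothesis $S(\overline{i})$ of the "switching stage" $m$ to which Lemma~\ref{ACL2.5} applies. I would argue inside $\PA$, so fix $i,k \in W$ with $i \prec k$ and assume $S(\overline{i})$, i.e.\ $\exists v\,(h(v) = \overline{i})$; fix such a $v$. Since $i \in W$ we have $\overline{i} \neq 0$. The set $\{u : h(u) \neq 0\}$ is nonempty (it contains $v$) and does not contain $0$ (as $h(0)=0$), so it has a least element, which we write as $m+1$; then $h(m) = 0$ by minimality and $h(m+1) \neq 0$. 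Applying Lemma~\ref{ACL1}.1 with the pair $(m+1, h(m+1))$ gives $h(u) = h(m+1)$ for all $u \geq m+1$, and since $v \geq m+1$ this yields $h(m+1) = h(v) = \overline{i}$. Thus $h(m) = 0$ and $h(m+1) = i$, so we are exactly in the situation of Lemma~\ref{ACL2.5} with $X = P_{T, m-1} \cup \left\{\bigvee_{i \prec j} S(\overline{j})\right\}$.

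For part~1, note that $\bigvee_{i \prec j} S(\overline{j}) \in X$ by the very definition of $X$, hence it is a tautological consequence of $X$; Lemma~\ref{ACL2.5}.3 then gives $\PRg\bigl(\gdl{\bigvee_{i \prec j} S(\overline{j})}\bigr)$. For part~2, from $i \prec k$ and $k \in W$ we have $k \succ i$, so Lemma~\ref{ACL2.5}.1 (with $j_0 := k$) tells us that $\neg S(\overline{k})$ is not a tautological consequence of $X$; applying Lemma~\ref{ACL2.5}.4 with $\psi := \neg S(\overline{k})$ yields $\neg \PRg(\gdl{\neg S(\overline{k})})$. Discharging the assumption $S(\overline{i})$ completes both claims.

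I do not expect a genuine obstacle here: essentially all the work has been front-loaded into Lemma~\ref{ACL2.5}, and what remains is bookkeeping. The only points that need a little care are (a) reconstructing the stage $m$ from $S(\overline{i})$ via the monotonicity fact Lemma~\ref{ACL1}.1, as above, and (b) checking that the set $X$ produced this way is literally the $X$ figuring in Lemma~\ref{ACL2.5}, so that its parts~1--4 apply verbatim — in particular that the disjunction $\bigvee_{i\prec j}S(\overline j)$ formed internally for this value of $i$ coincides, provably in $\PA$, with the standard formula named in the statement, which holds because all the data ($i$, the frame $\prec$ on $W$, the Gödel coding) are standard and primitive recursive.
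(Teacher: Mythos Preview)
Your proposal is correct and follows essentially the same approach as the paper: reason in $\PA + S(\overline{i})$, extract the switching stage $m$ with $h(m)=0$ and $h(m+1)=i$, and then apply Lemma~\ref{ACL2.5} parts 3, 1, and 4 respectively. You are slightly more explicit than the paper in deriving $m$ via Lemma~\ref{ACL1}.1 and in noting that $\neg S(\overline{k})$ is a standard formula (needed for Lemma~\ref{ACL2.5}.4), but the argument is the same.
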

\begin{proof}
Suppose $i \prec k$. 
We reason in $\PA + S(\overline{i})$: 
Let $m$ be such that $h(m) = 0$ and $h(m+1) = i$. 
Let $X = P_{T, m-1} \cup \left\{\bigvee_{i \prec j}S(\overline{j})\right\}$. 

Since $\bigvee_{i \prec j}S(\overline{j})$ is a t.c.~of $X$, $\PRg\left(\gdl{\bigvee_{i \prec j}S(\overline{j})} \right)$ holds by Lemma \ref{ACL2.5}.3. 
Also $\neg S(\overline{k})$ is not a t.c.~of $X$ by Lemma \ref{ACL2.5}.1. 
Therefore $\neg \PRg(\gdl{\neg S(\overline{k})})$ holds by Lemma \ref{ACL2.5}.4 (because $\neg S(\overline{k})$ is standard). 
\end{proof}

\begin{lem}\label{ACL4}
For any $\varphi$ and $\psi$, $\PA \vdash \PRg(\gdl{\varphi \to \psi}) \to (\PRg(\gdl{\varphi}) \to \PRg(\gdl{\psi}))$. 
\end{lem}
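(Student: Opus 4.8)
The plan is to reason inside $\PA$: assume the two hypotheses $\PRg(\gdl{\varphi \to \psi})$ and $\PRg(\gdl{\varphi})$ and prove $\PRg(\gdl{\psi})$, splitting on whether $\Con_T$ holds.

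In the case $\Con_T$ I would \emph{not} attempt to establish $({\bf K})$ directly (one should not expect a purely syntactic proof of $({\bf K})$ for a bare Rosser predicate). Instead, since $\PRg(\gdl{\chi})$ entails ${\sf Pr}_g(\gdl{\chi})$, Lemma \ref{ACL2}.1 turns the hypotheses into $\Prov(\gdl{\varphi \to \psi})$ and $\Prov(\gdl{\varphi})$, hence $\Prov(\gdl{\psi})$ by the usual closure of $\Prov$ under modus ponens, hence ${\sf Pr}_g(\gdl{\psi})$ again by Lemma \ref{ACL2}.1; so $\Prf_g(\gdl{\psi}, y)$ holds for some $y$. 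If $\PRg(\gdl{\psi})$ failed, then because $\Prf_g(\gdl{\psi}, y)$ holds and $\neg(\gdl{\psi}) = \gdl{\neg \psi}$, there would be $z \leq y$ with $\Prf_g(\gdl{\neg \psi}, z)$, giving ${\sf Pr}_g(\gdl{\neg \psi})$ and so $\Prov(\gdl{\neg \psi})$; together with $\Prov(\gdl{\psi})$ this yields $\neg \Con_T$, contradicting the case assumption. Hence $\PRg(\gdl{\psi})$.

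In the case $\neg \Con_T$ I would invoke the structure already isolated in Lemma \ref{ACL2.5}. By Lemma \ref{ACL1}.3 we have $\exists x(S(x) \land x \neq 0)$, and then — as in the proof of Lemma \ref{ACL1}.3 — there are $i \neq 0$ and $m$ with $h(m) = 0$ and $h(m+1) = i$; put $X = P_{T, m-1} \cup \{\bigvee_{i \prec j} S(\overline{j})\}$. Applying Lemma \ref{ACL2.5}.4 in contrapositive form, first with its formula taken to be $\varphi \to \psi$ and then with $\varphi$, the hypotheses give that $\varphi \to \psi$ and $\varphi$ are both tautological consequences of $X$; since tautological consequence is closed under modus ponens (a fact that $\PA$ verifies), $\psi$ is a tautological consequence of $X$, and Lemma \ref{ACL2.5}.3 then gives $\PRg(\gdl{\psi})$. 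Combining the two cases proves the lemma.

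I expect the only real obstacle to be the consistent case: since nothing forces a Rosser predicate to satisfy $({\bf K})$, the argument has to route through the observation that a failure of $({\bf K})$ at $\psi$ would make $T$ prove both $\psi$ and $\neg \psi$, so it can happen only when $T$ is already inconsistent — and there Lemma \ref{ACL2.5} delivers $({\bf K})$ outright. The remaining work is just routine manipulation of $\Prov$, ${\sf Pr}_g$, $\Prf_g$, and tautological consequence, all of which $\PA$ handles without difficulty.
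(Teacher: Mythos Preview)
Your proof is correct and follows essentially the same approach as the paper: split on $\Con_T$, handle the inconsistent case via Lemma~\ref{ACL2.5} exactly as you do, and in the consistent case reduce $\PRg$ to $\Prov$ via Lemma~\ref{ACL2}.1. The only cosmetic difference is that the paper packages the $\Con_T$ case as the equivalence $\PA + \Con_T \vdash {\sf Pr}_g(\gdl{\chi}) \leftrightarrow \PRg(\gdl{\chi})$ up front, whereas you unroll the same reasoning as a proof by contradiction from $\neg \PRg(\gdl{\psi})$.
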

\begin{proof}
Since $\PA + \Con_T \vdash \Prov(\gdl{\varphi}) \to \neg \Prov(\gdl{\neg \varphi})$, we have $\PA + \Con_T \vdash {\sf Pr}_g(\gdl{\varphi}) \to \neg {\sf Pr}_g(\gdl{\neg \varphi})$ by Lemma \ref{ACL2}. 
Thus $\PA + \Con_T \vdash {\sf Pr}_g(\gdl{\varphi}) \leftrightarrow \PRg(\gdl{\varphi})$. 
It follows that $\PA + \Con_T \vdash \PRg(\gdl{\varphi \to \psi}) \to (\PRg(\gdl{\varphi}) \to \PRg(\gdl{\psi}))$. 

Then it suffices to prove $\PA + \neg \Con_T \vdash \PRg(\gdl{\varphi \to \psi}) \to (\PRg(\gdl{\varphi}) \to \PRg(\gdl{\psi}))$. 
We work in $\PA + \neg \Con_T$: 
By Lemma \ref{ACL1}.3, there exists $i \neq 0$ such that $S(i)$ holds. 
Then there exists $m$ such that $h(m) = 0$ and $h(m + 1) = i$. 
Let $X = P_{T, m - 1} \cup \left\{\bigvee_{i \prec j} S(\overline{j}) \right\}$. 

Suppose $\PRg(\gdl{\varphi \to \psi})$ and $\PRg(\gdl{\varphi})$. 
Then $\varphi \to \psi$ and $\varphi$ are t.c.'s of $X$ by Lemma \ref{ACL2.5}.4 (because $\varphi \to \psi$ and $\varphi$ are standard). 
Hence $\psi$ is also a t.c.~of $X$. 
We conclude $\PRg(\gdl{\psi})$ by Lemma \ref{ACL2.5}.3. 
\end{proof}

Define $f$ to be the arithmetical interpretation based on $\PRg(x)$ by $f(p) \equiv \exists x (S(x) \land x \neq 0 \land x \Vdash \gdl{p})$ for each propositional variable $p$. 

\begin{lem}\label{ACL6}Let $i \in W$ and $A$ be any modal formula. 
\begin{enumerate}	
	\item If $i \Vdash A$, then $\PA \vdash S(\overline{i}) \to f(A)$. 
	\item If $i \nVdash A$, then $\PA \vdash S(\overline{i}) \to \neg f(A)$. 
\end{enumerate}
\end{lem}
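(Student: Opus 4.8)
The plan is to prove statements 1 and 2 simultaneously by induction on the construction of the modal formula $A$.

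The base cases will be easy. For $A \equiv \bot$ there is nothing to do, since $i \nVdash \bot$ and $\PA \vdash S(\overline{i}) \to \neg(0=1)$. For $A \equiv p$ a propositional variable, recall that $f(p)$ is the sentence $\exists x(S(x) \land x \neq 0 \land x \Vdash \gdl{p})$, and I would use two facts: first, by Lemma \ref{ACL1}.2 the formula $S$ takes at most one nonzero value, so provably in $\PA + S(\overline{i})$ the only possible witness to $\exists x(S(x) \land x \neq 0 \land \cdots)$ is $x = \overline{i}$; second, since the Kripke model $M$ is primitive recursively represented, $\PA$ decides every closed instance ``$\overline{i} \Vdash \gdl{p}$'', proving it exactly when $i \Vdash p$ holds in $M$. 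Then if $i \Vdash p$, reasoning in $\PA + S(\overline{i})$ the instance $x = \overline{i}$, for which $S(\overline{i})$, $\overline{i} \neq 0$ and $\overline{i} \Vdash \gdl{p}$ all hold, witnesses $f(p)$; and if $i \nVdash p$, then in $\PA + S(\overline{i})$ any witness $a$ to $f(p)$ satisfies $S(a) \land a \neq 0$, hence $a = \overline{i}$ by Lemma \ref{ACL1}.2, hence $\overline{i} \Vdash \gdl{p}$, contradicting $\PA \vdash \neg(\overline{i} \Vdash \gdl{p})$. The inductive steps for the propositional connectives will be routine, since $f$ commutes with the connectives, $\Vdash$ satisfies the usual clauses, and $\PA$ is classical: for instance for $A \equiv B \to C$ one distinguishes whether $i \Vdash B \to C$ and feeds the induction hypotheses for $B$ and $C$ through propositional logic.

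The real work is the case $A \equiv \Box B$, where $f(\Box B)$ is $\PRg(\gdl{f(B)})$. The key auxiliary remark I would isolate is that, for standard sentences $\varphi$ and $\psi$, $\PA \vdash \varphi \to \psi$ implies $\PA \vdash \PRg(\gdl{\varphi}) \to \PRg(\gdl{\psi})$: indeed $\PA \vdash \varphi \to \psi$ gives $T \vdash \varphi \to \psi$, so $\PA \vdash \PRg(\gdl{\varphi \to \psi})$ because $\PRg(x)$ is a provability predicate of $T$, and then one applies Lemma \ref{ACL4}. Now if $i \Vdash \Box B$, the set $\{j : i \prec j\}$ is finite and nonempty and every $j$ with $i \prec j$ satisfies $j \Vdash B$, so the induction hypothesis gives $\PA \vdash S(\overline{j}) \to f(B)$ for each such $j$; taking the finite nonempty disjunction, $\PA \vdash \bigvee_{i \prec j}S(\overline{j}) \to f(B)$. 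By the auxiliary remark $\PA \vdash \PRg(\gdl{\bigvee_{i \prec j}S(\overline{j})}) \to \PRg(\gdl{f(B)})$, and composing this with Lemma \ref{ACL3}.1 gives $\PA \vdash S(\overline{i}) \to \PRg(\gdl{f(B)})$, which is statement 1 for $\Box B$. If instead $i \nVdash \Box B$, I would fix $k \in W$ with $i \prec k$ and $k \nVdash B$; the induction hypothesis gives $\PA \vdash S(\overline{k}) \to \neg f(B)$, i.e. $\PA \vdash f(B) \to \neg S(\overline{k})$, so by the auxiliary remark $\PA \vdash \PRg(\gdl{f(B)}) \to \PRg(\gdl{\neg S(\overline{k})})$, whence $\PA \vdash \neg\PRg(\gdl{\neg S(\overline{k})}) \to \neg\PRg(\gdl{f(B)})$; composing with Lemma \ref{ACL3}.2 yields $\PA \vdash S(\overline{i}) \to \neg\PRg(\gdl{f(B)})$, which is statement 2.

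The hard part will be locating the right form of the modal step. A natural-looking but, I expect, problematic route is to try to decide, via Lemma \ref{ACL2.5}, whether $f(B)$ is a tautological consequence of the set $X$; this seems to break down because the implications supplied by the induction hypothesis are only $\PA$-provable rather than propositional tautologies, and their $T$-proofs need not be short enough to lie in $P_{T, m-1}$. Channeling the step through the normality of $\PRg(x)$ (Lemma \ref{ACL4}) together with the ``accessibility'' Lemmas \ref{ACL3}.1 and \ref{ACL3}.2 and the fact that $\PRg(x)$ is a provability predicate of $T$ avoids this difficulty. Aside from this, the only points to be careful with are that $\bigvee_{i \prec j}S(\overline{j})$ is a genuine finite nonempty disjunction with a fixed standard G\"odel number --- which is precisely where the seriality of $M$, hence the choice of $\KD$ rather than $\K$, is used --- and the uniqueness-of-witness bookkeeping in the base case, which is exactly Lemma \ref{ACL1}.2.
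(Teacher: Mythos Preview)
Your proposal is correct and follows essentially the same route as the paper. The paper also argues by simultaneous induction on $A$, treats only the $\Box B$ case explicitly, and handles it exactly as you do: your ``auxiliary remark'' is precisely what the paper abbreviates by citing Lemmas~\ref{ACL2} and~\ref{ACL4}, and the two halves of Lemma~\ref{ACL3} are composed with it just as you describe; your detailed base cases and the closing remark about why one should not try to go through Lemma~\ref{ACL2.5} directly are sound additions that the paper simply omits.
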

\begin{proof}
We prove 1 and 2 simultaneously by induction on the construction of $A$. 
We give a proof only for the case that $A$ is of the form $\Box B$. 

1. Suppose $i \Vdash \Box B$. 
Then for any $j \succ i$, $j \Vdash B$. 
By induction hypothesis, $\PA \vdash \bigvee_{i \prec j} S(\overline{j}) \to f(B)$. 
By Lemmas \ref{ACL2} and \ref{ACL4}, $\PA \vdash \PRg\left(\gdl{\bigvee_{i \prec j} S(\overline{j})} \right) \to \PRg(\gdl{f(B)})$. 
Since $\PA \vdash S(\overline{i}) \to \PRg\left(\gdl{\bigvee_{i \prec j} S(\overline{j})}\right)$ by Lemma \ref{ACL3}, we obtain $\PA \vdash S(\overline{i}) \to f(\Box B)$. 

2. Suppose $i \nVdash \Box B$. 
Then there exists $j \in W$ such that $i \prec j$ and $j \nVdash B$. 
By induction hypothesis, we have $\PA \vdash S(\overline{j}) \to \neg f(B)$. 
By Lemmas \ref{ACL2} and \ref{ACL4}, $\PA \vdash \neg \PRg(\gdl{\neg S(\overline{j})}) \to \neg \PRg(\gdl{f(B)})$. 
Since $i \prec j$, we obtain $\PA \vdash S(\overline{i}) \to \neg \PRg(\gdl{\neg S(\overline{j})})$ by Lemma \ref{ACL3}. 
Therefore $\PA \vdash S(\overline{i}) \to \neg f(\Box B)$. 
\end{proof}

We finish the proof of Theorem \ref{ACT1}. 

\begin{proof}[Proof of Theorem \ref{ACT1}]\leavevmode

1. Proposition \ref{KDP} indicates that arithmetical soundness follows from Lemmas \ref{ACL2} and \ref{ACL4}. 

2. Suppose $\KD \nvdash A$. 
Then there exists $i \in W$ such that $i \nVdash A$. 
By Lemma \ref{ACL6}, $\PA \vdash S(\overline{i}) \to \neg f(A)$. 
Since $T \nvdash \neg S(\overline{i})$ by Lemma \ref{ACL1}.4, we obtain $T \nvdash f(A)$. 
\end{proof}

Notice that our arithmetical interpretation $f$ maps each propositional variable to a $\Sigma_1$ sentence. 
We say such an arithmetical interpretation a $\Sigma_1$ arithmetical interpretation. 
Then we obtain the following corollary. 

\begin{cor}
Let $A$ be any modal formula. 
The following are equivalent: 
\begin{enumerate}
	\item $\KD \vdash A$. 
	\item For any normal Rosser provability predicate $\PR(x)$ of $T$ and any arithmetical interpretation $f$ based on $\PR(x)$, $T \vdash f(A)$. 
	\item For any normal Rosser provability predicate $\PR(x)$ of $T$ and any $\Sigma_1$ arithmetical interpretation $f$ based on $\PR(x)$, $T \vdash f(A)$. 
\end{enumerate}
\end{cor}

\section{Normal modal logic $\KDR$}\label{Sec:KDR}

It is known that the formalized version of Proposition \ref{RPP} is provable in $\PA$, that is, for any formula $\varphi$, $\PA \vdash \Prov(\gdl{\neg \varphi}) \to \Prov(\gdl{\neg \PR(\gdl{\varphi})})$ (see \cite{Sha91}).
Relating to this observation, in this section, we consider the following condition for Rosser provability predicates: 
\begin{eqnarray}\label{D4}
	\text{For any sentence}\ \varphi,\ T \vdash \PR(\gdl{\neg \varphi}) \to \PR(\gdl{\neg \PR(\gdl{\varphi})}). 
\end{eqnarray}

We introduce a new normal modal logic $\KDR$. 
\begin{defn}
$\KDR = \KD + \Box \neg p \to \Box \neg \Box p$. 
\end{defn}

Since $\KD \nvdash \Box \neg p \to \Box \neg \Box p$, $\KDR$ is a proper extension of $\KD$, and hence condition (\ref{D4}) is not valid for some Rosser provability predicate by Theorem \ref{ACT1}. 

It is easy to show that the validity of the modal formula $\Box \neg p \to \Box \neg \Box p$ in a Kripke frame $\mathcal{F} = (W, \prec)$ is characterized by the condition 
\begin{eqnarray}\label{R}
	\forall x \forall y \in W(x \prec y \Rightarrow \exists z \in W(x \prec z\ \&\ y \prec z)).
\end{eqnarray}
We say a Kripke model $M = (W, \prec, \Vdash)$ is a {\it $\KDR$-model} if $\prec$ is serial and satisfies condition (\ref{R}). 
Then it is proved that $\KDR$ is sound and complete with respect to the class of all $\KDR$-models (This follows from Theorem 3 in Boolos \cite{Boo93} p.~89 because $\KDR$ is $\K\{(0, 0, 1, 1), (0, 1, 1, 1)\}$ in the terminology of Boolos). 

\begin{prop}\label{KCKDR}
For any modal formula $A$, the following are equivalent: 
\begin{enumerate}
	\item $\KDR \vdash A$. 
	\item $A$ is valid in all $\KDR$-models.
\end{enumerate}
\end{prop}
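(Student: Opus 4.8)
The plan is to establish the two directions separately: $(1) \Rightarrow (2)$ is a soundness check, and $(2) \Rightarrow (1)$ is proved by the standard canonical model construction (alternatively, the whole statement can be deduced from Theorem 3 of Boolos \cite{Boo93}, as remarked above). For soundness it suffices to verify that every axiom of $\KDR$ is valid in each $\KDR$-model and that modus ponens, necessitation and substitution preserve validity within a fixed $\KDR$-model. Propositional tautologies and the $\K$-axiom are valid in every Kripke model; $\neg \Box \bot$ is valid precisely because $\prec$ is serial; and $\Box \neg p \to \Box \neg \Box p$ is valid in every frame satisfying condition (\ref{R}), which is exactly the characterization recorded just before the proposition. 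Closure under the inference rules is routine.

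For completeness I argue by contraposition. Assuming $\KDR \nvdash A$, let $M_c = (W_c, \prec_c, \Vdash_c)$ be the canonical model of $\KDR$: $W_c$ is the set of all maximal $\KDR$-consistent sets, $\Gamma \prec_c \Delta$ iff $\{B : \Box B \in \Gamma\} \subseteq \Delta$, and $\Gamma \Vdash_c p$ iff $p \in \Gamma$. The Truth Lemma, $\Gamma \Vdash_c B$ iff $B \in \Gamma$, holds by the usual induction on $B$, the modal step relying on the Existence Lemma. Since $\neg A$ is $\KDR$-consistent, it extends to some $\Gamma \in W_c$, whence $\Gamma \nVdash_c A$; so it remains to check that $M_c$ is a $\KDR$-model. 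Seriality follows from $\neg \Box \bot \in \KDR$: for any $\Gamma \in W_c$ the set $\{B : \Box B \in \Gamma\}$ is $\KDR$-consistent, since otherwise a refutable finite conjunction of its members would, via necessitation and the $\K$-axiom, force $\Box \bot \in \Gamma$; hence it extends to some $\Delta \in W_c$ with $\Gamma \prec_c \Delta$.

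The main step is verifying condition (\ref{R}) for $\prec_c$. Given $\Gamma \prec_c \Delta$, I claim the set $\Sigma = \{B : \Box B \in \Gamma\} \cup \{B : \Box B \in \Delta\}$ is $\KDR$-consistent; any $\Theta \in W_c$ extending $\Sigma$ then satisfies $\Gamma \prec_c \Theta$ and $\Delta \prec_c \Theta$, witnessing (\ref{R}). If $\Sigma$ were inconsistent, then — since $\{B : \Box B \in \Gamma\}$ and $\{B : \Box B \in \Delta\}$ are individually consistent by the seriality argument — there would be formulas $A$ with $\Box A \in \Gamma$ and $B$ with $\Box B \in \Delta$, obtained by conjoining finitely many members of each side and closing under the $\K$-axiom, such that $\vdash A \to \neg B$. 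Then $\vdash \Box A \to \Box \neg B$ in $\KDR$, so $\Box \neg B \in \Gamma$; the axiom $\Box \neg p \to \Box \neg \Box p$ (with $p$ substituted by $B$) gives $\Box \neg \Box B \in \Gamma$; and $\Gamma \prec_c \Delta$ then yields $\neg \Box B \in \Delta$, contradicting $\Box B \in \Delta$. Hence $\Sigma$ is consistent, (\ref{R}) holds for $M_c$, and the proof is complete. I expect this verification of (\ref{R}) to be the only non-mechanical point: once the set $\Sigma$ is written down, the new axiom does precisely the work required, and the only care needed is the bookkeeping with finite conjunctions of box-formulas.
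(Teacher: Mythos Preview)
Your proof is correct. The soundness direction is routine, and your canonical-model argument for completeness is the standard one; the verification that the canonical frame satisfies condition~(\ref{R}) is carried out correctly, with the $\KDR$ axiom doing exactly the work you describe.

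The paper, however, does not give a proof of this proposition at all: it simply observes that $\KDR$ is the logic $\K\{(0,0,1,1),(0,1,1,1)\}$ in the parametrized family treated by Boolos, and invokes Theorem~3 of \cite{Boo93}, p.~89, which yields Kripke completeness with respect to the corresponding first-order frame conditions in one stroke. Your argument is therefore genuinely different in that it is self-contained: you re-derive, for this specific logic, the instance of the general Sahlqvist/Lemmon--Scott completeness result that the paper merely cites. What you gain is a transparent, direct verification that requires no background beyond the basic canonical-model method; what the paper's approach buys is brevity and the assurance that $\KDR$ sits inside a well-understood class of logics for which completeness is already known. You even note the Boolos alternative yourself, so you are aware of both routes.
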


Here we give an alternative axiomatization of $\KDR$. 

\begin{defn}\leavevmode
\begin{enumerate}
	\item Let $\KR = \K + \Box \neg p \to \Box \neg \Box p$. 
	\item Let $\KR^+$ be the logic obtained by adding the inference rule $\dfrac{\Box A}{A}$ to $\KR$. 
\end{enumerate}
\end{defn}

\begin{prop}
For any modal formula $A$, the following are equivalent: 
\begin{enumerate}
	\item $\KDR \vdash A$. 
	\item $\KR^+ \vdash A$. 
\end{enumerate}
\end{prop}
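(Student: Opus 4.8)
The plan is to prove the two inclusions $\KDR \subseteq \KR^+$ and $\KR^+ \subseteq \KDR$ separately, working throughout with the characterization of each logic as a set of theorems generated from its axioms by modus ponens, necessitation, substitution, and (for $\KR^+$) the extra rule $\frac{\Box A}{A}$.

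For $\KDR \subseteq \KR^+$: since $\KR^+$ is a normal modal logic containing the axiom $\Box\neg p \to \Box\neg\Box p$, it suffices to check $\KR^+ \vdash \neg\Box\bot$; then $\KD + (\Box\neg p \to \Box\neg\Box p) = \KDR \subseteq \KR^+$. To obtain $\neg\Box\bot$, substitute $p := \bot$ in the axiom of $\KR$ to get $\KR \vdash \Box\neg\bot \to \Box\neg\Box\bot$; since $\KR \vdash \Box\neg\bot$ by necessitation (as $\neg\bot$ is a tautology), modus ponens gives $\KR \vdash \Box\neg\Box\bot$; finally the rule $\frac{\Box A}{A}$ with $A := \neg\Box\bot$ yields $\KR^+ \vdash \neg\Box\bot$.

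For $\KR^+ \subseteq \KDR$: since $\KDR$ is a normal modal logic containing every axiom of $\KR$ (the tautologies, the $\K$-axiom, and $\Box\neg p \to \Box\neg\Box p$), it is enough to verify that $\KDR$ is closed under $\frac{\Box A}{A}$, i.e. that $\KDR \vdash \Box A$ implies $\KDR \vdash A$. I would prove the contrapositive semantically using Proposition~\ref{KCKDR}. Assume $\KDR \nvdash A$ and pick a $\KDR$-model $M = (W, \prec, \Vdash)$ with $w \nVdash A$ for some $w \in W$. Form $M' = (W \cup \{r\}, \prec', \Vdash')$ by adjoining a fresh point $r$, keeping $\prec$ and $\Vdash$ on $W$, setting $r \prec' x$ for every $x \in W$ (and $r \not\prec' r$), and letting $\Vdash'$ agree with $\Vdash$ on $W$ with any valuation at $r$. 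Then $M'$ is again a $\KDR$-model: $r$ is serial because $W \neq \emptyset$, every point of $W$ stays serial, and condition (R) is preserved — the only new instances have $x = r$ and $y \in W$, and seriality of $M$ provides $z \in W$ with $y \prec z$, whence $r \prec' z$ as well. Since $r$ is unreachable from any point of $W$, the submodel of $M'$ generated by $w$ coincides with the one in $M$, so $w \nVdash' A$; as $r \prec' w$, we get $r \nVdash' \Box A$, so $\Box A$ is not valid in $M'$, and $\KDR \nvdash \Box A$ by the soundness half of Proposition~\ref{KCKDR}. Combining the two inclusions gives $\KDR = \KR^+$, which is the claim.

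The main obstacle is exactly the closure of $\KDR$ under $\frac{\Box A}{A}$: there is no obvious purely syntactic way to pass from a theorem $\Box A$ to $A$ inside $\KDR$ (note that $\Box A \to A$ itself is not a theorem, as non-reflexive $\KDR$-models show), which forces the root-adjunction argument; and within that argument the delicate point is checking that condition (R), not merely seriality, survives the addition of the new bottom point.
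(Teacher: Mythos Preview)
Your proof is correct and follows essentially the same approach as the paper: both directions are argued identically, deriving $\neg\Box\bot$ in $\KR^+$ via the substitution $p := \bot$ and the extra rule, and showing admissibility of $\dfrac{\Box A}{A}$ in $\KDR$ by adjoining a fresh root seeing every world of a countermodel. Your write-up is in fact more detailed than the paper's, which simply asserts that the extended model is a $\KDR$-model without spelling out the verification of condition~(\ref{R}).
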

\begin{proof}
$(1 \Rightarrow 2)$: It suffices to show $\KR^+ \vdash \neg \Box \bot$. 
Since $\KR \vdash \Box \neg \bot \to \Box \neg \Box \bot$ and $\K \vdash \Box \neg \bot$, we have $\KR \vdash \Box \neg \Box \bot$. 
Then $\KR^+ \vdash \neg \Box \bot$. 

$(2 \Rightarrow 1)$: It suffices to show that the rule $\dfrac{\Box A}{A}$ is admissible in $\KDR$. 
Suppose $\KDR \nvdash A$. 
Then there exists a $\KDR$-model $M = (W, \prec, \Vdash)$ in which $A$ is not valid by Proposition \ref{KCKDR}. 
Let $M' = (W', \prec', \Vdash')$ be the Kripke model defined as follows: 
\begin{itemize}
	\item $W' = W \cup \{0\}$, where $0$ is an element not contained in $W$, 
	\item $\prec' = \prec \cup \{(0, w) : w \in W\}$, 
	\item $0 \Vdash' p$; and $x \Vdash' p \iff x \Vdash p$ for $x \in W$. 
\end{itemize}
Then $M'$ is also a $\KDR$-model. 
Since $0 \nVdash \Box A$, we obtain $\KDR \nvdash \Box A$ by Proposition \ref{KCKDR} again. 
\end{proof}

In Theorem \ref{KDRT1} below, we prove the existence of a normal Rosser provability predicate $\PR(x)$ satisfying $\KDR \subseteq \PL(\PR)$. 
%Hence we obtain a Rosser provability predicate whose provability logic is a proper extention of $\KD$. 
From the following proposition, in a construction of a Rosser provability predicate $\PR(x)$ satisfying $\KDR \subseteq \PL(\PR)$, we have to avoid the situation $T \vdash \neg \Con_T \to (\PR(\gdl{\varphi}) \lor \PR(\gdl{\neg \varphi}))$. 

\begin{prop}\label{KDRP3}
If $\KDR \subseteq \PL(\PR)$, then for some formula $\varphi$, $T$ cannot prove $\neg \Con_T \to (\PR(\gdl{\varphi}) \lor \PR(\gdl{\neg \varphi}))$. 
\end{prop}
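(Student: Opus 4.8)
The plan is to argue by contradiction. Suppose that $\KDR \subseteq \PL(\PR)$ and that, moreover, $T \vdash \neg \Con_T \to (\PR(\gdl{\varphi}) \lor \PR(\gdl{\neg \varphi}))$ for \emph{every} sentence $\varphi$. The key is to combine this ``decidability under inconsistency'' assumption with the characteristic axiom $\Box \neg p \to \Box \neg \Box p$ of $\KDR$ and with the basic Rosser property from Proposition \ref{RPP} to derive $\PA \vdash \Con_T$, contradicting the second incompleteness theorem (here one uses that $\PA \subseteq T$, so $T$ is consistent and $\PA \nvdash \Con_T$).

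Here is the intended chain of reasoning. First I would fix a suitable self-referential sentence. A natural candidate is a Rosser-style fixed point, or more simply a Gödel sentence $\varphi$ with $\PA \vdash \varphi \leftrightarrow \neg \PR(\gdl{\varphi})$. Reason inside $\PA + \neg \Con_T$: by the blanket assumption, $\PR(\gdl{\varphi}) \lor \PR(\gdl{\neg\varphi})$ holds. Now I would split into the two disjuncts. If $\PR(\gdl{\neg \varphi})$ holds, then since $\KDR \subseteq \PL(\PR)$ gives us the arithmetized axiom $T \vdash \PR(\gdl{\neg \varphi}) \to \PR(\gdl{\neg \PR(\gdl{\varphi})})$ (condition (\ref{D4}), which is exactly the arithmetical content of $\Box\neg p \to \Box\neg\Box p$ under the interpretation sending $p \mapsto \varphi$), we get $\PR(\gdl{\neg \PR(\gdl{\varphi})})$, i.e.\ $\PR(\gdl{\varphi})$ by the fixed-point equation. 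So in either case $\PR(\gdl{\varphi})$ holds. Combined with the remaining disjunct we see that actually $\PR(\gdl{\varphi})$ always holds under $\neg\Con_T$; but also, unwinding the fixed point once more, $\varphi$ is equivalent to $\neg \PR(\gdl{\varphi})$, and the normality of $\PR$ (Proposition \ref{KDP}(3)) together with provable $\Sigma_1$-completeness for the $\Sigma_1$ predicate $\PR$ lets me push $\PR(\gdl{\varphi})$ to $\PR(\gdl{\neg\varphi})$ as well, whence $\PR(\gdl{0=1})$, contradicting $T \vdash \neg \PR(\gdl{0=1})$ (the consequence of Proposition \ref{RPP} noted in the excerpt). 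Formalizing this last step, one obtains $\PA \vdash \neg\Con_T \to \bot$, i.e.\ $\PA \vdash \Con_T$ — impossible.

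The main obstacle I anticipate is getting the fixed-point bookkeeping to close cleanly: one must choose the self-referential sentence so that \emph{both} the implication supplied by condition (\ref{D4}) and the normality/$\Sigma_1$-completeness steps fire inside $\PA$, and so that the two applications of the fixed-point equivalence land on the same sentence rather than generating an infinite regress. It may be cleaner to use a Rosser sentence (one asserting its own Rosser-unprovability relative to $\PR$) rather than a plain Gödel sentence, or to phrase the argument directly in terms of $\PR(\gdl{\varphi}) \land \PR(\gdl{\neg\varphi}) \to \PR(\gdl{0=1})$, which is just the normality condition applied to the tautology $\varphi \to (\neg\varphi \to 0=1)$. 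A secondary point of care is that (\ref{D4}) is the arithmetical soundness of the axiom $\Box\neg p \to \Box\neg\Box p$ under arbitrary arithmetical interpretations based on $\PR$; since $\KDR \subseteq \PL(\PR)$ by hypothesis, this is automatic, but I should state explicitly which interpretation $f$ (namely $f(p) \equiv \varphi$) is being used. Once the sentence is chosen correctly the computation is short, so I expect the proof to be a few lines.
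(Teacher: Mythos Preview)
Your opening moves match the paper's proof: take the fixed point $T \vdash \varphi \leftrightarrow \neg \PR(\gdl{\varphi})$, use the $\KDR$ axiom to get $T \vdash \PR(\gdl{\neg\varphi}) \to \PR(\gdl{\neg\PR(\gdl{\varphi})})$, and by normality and the fixed point deduce $T \vdash \PR(\gdl{\neg\varphi}) \to \PR(\gdl{\varphi})$. Together with $T \vdash \PR(\gdl{\neg\varphi}) \to \neg \PR(\gdl{\varphi})$ (from $\KD$) this already yields $T \vdash \neg \PR(\gdl{\neg\varphi})$, and with the blanket assumption $T \vdash \neg\Con_T \to \PR(\gdl{\varphi})$, hence $T \vdash \neg\Con_T \to \neg\varphi$. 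So far, fine.

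The genuine gap is the next step, where you try to ``push $\PR(\gdl{\varphi})$ to $\PR(\gdl{\neg\varphi})$'' via ``provable $\Sigma_1$-completeness for the $\Sigma_1$ predicate $\PR$''. Rosser provability predicates do \emph{not} enjoy provable $\Sigma_1$-completeness; nothing in the hypothesis $\KDR \subseteq \PL(\PR)$ gives you $T \vdash \sigma \to \PR(\gdl{\sigma})$ for $\Sigma_1$ sentences $\sigma$, nor the special case $T \vdash \PR(\gdl{\varphi}) \to \PR(\gdl{\PR(\gdl{\varphi})})$ (that would be an instance of $(\mathbf{4})$, which is explicitly outside $\KDR$). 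In fact you have just shown $T \vdash \neg \PR(\gdl{\neg\varphi})$ outright, so you cannot hope to derive $\PR(\gdl{\neg\varphi})$ inside $T + \neg\Con_T$ unless that theory is already inconsistent --- which is precisely what you are not entitled to assume. Your fallback reformulation via $\PR(\gdl{\varphi}) \land \PR(\gdl{\neg\varphi}) \to \PR(\gdl{0=1})$ does not help, since the missing conjunct is exactly the one you cannot obtain.

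The paper closes the argument by a different route that avoids any $\Sigma_1$-completeness for $\PR$. From $T \vdash \neg\Con_T \to \neg\varphi$ it invokes the \emph{formalized} Rosser theorem for the standard predicate, namely $T \vdash \Prov(\gdl{\neg\varphi}) \to \neg\Con_T$, to obtain $T \vdash \Prov(\gdl{\neg\varphi}) \to \neg\varphi$; L\"ob's theorem then gives $T \vdash \neg\varphi$, and the (external) Rosser incompleteness theorem yields the contradiction with the consistency of $T$. The moral is that the final step must pass through the \emph{standard} predicate $\Prov$ (for which $\Sigma_1$-completeness and L\"ob are available), not through $\PR$.
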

\begin{proof}
Suppose $\KDR \subseteq \PL(\PR)$. 
Let $\varphi$ be a sentence satisfying the equivalence $T \vdash \varphi \leftrightarrow \neg \PR(\gdl{\varphi})$. 
Since $\KDR \subseteq \PL(\PR)$, we have $T \vdash \PR(\gdl{\neg \varphi}) \to \PR(\gdl{\neg \PR(\gdl{\varphi})})$. 
Then $T \vdash \PR(\gdl{\neg \varphi}) \to \PR(\gdl{\varphi})$. 
We obtain $T \vdash \neg \PR(\gdl{\neg \varphi})$ because $T \vdash \PR(\gdl{\neg \varphi}) \to \neg \PR(\gdl{\varphi})$. 
Assume towards a contradiction that $T$ proves $\Con_T \to (\PR(\gdl{\varphi}) \lor \PR(\gdl{\neg \varphi}))$.
Then $T \vdash \neg \Con_T \to \PR(\gdl{\varphi})$, and hence $T \vdash \neg \Con_T \to \neg \varphi$. 
By the formalized version of Rosser's first incompleteness theorem, we have $T \vdash \Prov(\gdl{\neg \varphi}) \to \neg \Con_T$ (see \cite{Lin03}). 
Thus $T \vdash \Prov(\gdl{\neg \varphi}) \to \neg \varphi$. 
By L\"ob's theorem, we obtain $T \vdash \neg \varphi$. 
Then $T$ is inconsistent by Rosser's first incompleteness theorem. 
This is a contradiction. 
Therefore $T \nvdash \Con_T \to (\PR(\gdl{\varphi}) \lor \PR(\gdl{\neg \varphi}))$.\end{proof}

We prove the main theorem of this section. 

\begin{thm}\label{KDRT1}
There exists a normal Rosser provability predicate $\PRgp(x)$ of $T$ such that $\KDR \subseteq \PL(\PRgp)$. 
\end{thm}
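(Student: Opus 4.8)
The plan is to adapt the construction behind Theorem \ref{ACT1}. Exactly as there, I would use the recursion theorem to define a primitive recursive function $g'$ together with the $\Sigma_1$ formula $S'(x)\equiv\exists v(h'(v)=x)$, where the clock $h'$ behaves as $h$ did: $h'(0)=0$, and $h'$ first takes a nonzero value $i$ at the least stage $m$ at which $\neg S'(\overline{j})$ is a tautological consequence of $P_{T,m}$ for some $j\in W$. Before $h'$ fires, $g'$ runs Procedure 1 (enumerating $\Proof$), so the analogues of Lemmas \ref{ACL1} and \ref{ACL2} hold verbatim; in particular $\PRgp(x)$ is a Rosser provability predicate of $T$, $\PA\vdash\Con_T\to\forall x(\Prov(x)\leftrightarrow\PRgp(x))$, and $h'\equiv 0$ exactly when $T$ is consistent. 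The new content is in Procedure 2. When $h'$ fires at $m$ with value $i$, I would fix (primitive recursively) a serial, indeed $\KDR$-, Kripke model $M=(W,\prec,\Vdash)$ presented as a disjoint union of finite $\KDR$-models, put $D_i=\bigvee_{i\prec j}S'(\overline{j})$ (nonempty by seriality), and start from $X=P_{T,m-1}\cup\{D_i\}$, still propositionally satisfiable as in Lemma \ref{ACL2.5}. The crucial change is that $g'$ no longer merely enumerates the tautological consequences of $X$: processing $\varphi_0,\varphi_1,\dots$ in order, when $\varphi_k$ is not of the shape $\PRgp(\gdl{\psi})$ it is decided and emitted (with the \textbf{C1}/\textbf{C2}/\textbf{C3} layering) according to whether $\varphi_k$, $\neg\varphi_k$, or neither is a tautological consequence of the set built so far, whereas if $\varphi_k\equiv\PRgp(\gdl{\psi})$ then $\psi$ is an earlier $\varphi_l$, already decided, and $\varphi_k$ is decided \emph{in the same way as} $\psi$. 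The emitted theory $\Theta$ is then propositionally closed and, by this reflection clause, satisfies $\neg\varphi\in\Theta\Rightarrow\neg\PRgp(\gdl{\varphi})\in\Theta$. Finally, letting $\rho$ be a sentence with $T\vdash\rho\leftrightarrow\neg\PRgp(\gdl{\rho})$ (available inside the construction because $g'$ knows its own index), $g'$ leaves $\rho$ and $\PRgp(\gdl{\rho})$ permanently undecided; by Proposition \ref{KDRP3} we must allow $\Theta$ to be incomplete, and this is the minimal way to do it.

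Granting this, I would prove: (i) the analogues of Lemmas \ref{ACL2} and \ref{ACL2.5}, in particular that reasoning in $\PA+\neg\Con_T$, every sentence $\psi$ satisfies $\PRgp(\gdl{\psi})\Leftrightarrow\psi\in\Theta$, while $\Theta$ is propositionally satisfiable and closed under tautological consequence; (ii) the third condition of Proposition \ref{KDP}, $\PA\vdash\PRgp(\gdl{\varphi\to\psi})\to(\PRgp(\gdl{\varphi})\to\PRgp(\gdl{\psi}))$, exactly as in Lemma \ref{ACL4} (under $\Con_T$ from $\PRgp\leftrightarrow\Prov$, under $\neg\Con_T$ from closure of $\Theta$ under modus ponens); and (iii) condition (\ref{D4}), $\PA\vdash\PRgp(\gdl{\neg\varphi})\to\PRgp(\gdl{\neg\PRgp(\gdl{\varphi})})$ for every sentence $\varphi$. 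For (iii) in $\PA+\Con_T$: if $\PRgp(\gdl{\neg\varphi})$ then $\Prov(\gdl{\neg\varphi})$, so by the known formalized version of Proposition \ref{RPP} (see \cite{Sha91}), $\Prov(\gdl{\neg\PRgp(\gdl{\varphi})})$, hence $\PRgp(\gdl{\neg\PRgp(\gdl{\varphi})})$ by $\PA\vdash\Con_T\to\forall x(\Prov(x)\leftrightarrow\PRgp(x))$. For (iii) in $\PA+\neg\Con_T$: if $\PRgp(\gdl{\neg\varphi})$ then $\neg\varphi\in\Theta$ by (i), hence $\neg\PRgp(\gdl{\varphi})\in\Theta$ by the reflection property, hence $\PRgp(\gdl{\neg\PRgp(\gdl{\varphi})})$ by (i) again. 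Since $\neg\Box\bot\in\PL(\PRgp)$ is immediate from $T\vdash\neg\PRgp(\gdl{0=1})$, and $\PL(\PRgp)$ is always closed under modus ponens, necessitation and substitution, this yields $\KDR=\KD+(\Box\neg p\to\Box\neg\Box p)\subseteq\PL(\PRgp)$, which in particular shows $\PRgp$ is normal.

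The point where I expect the real work to lie is the consistency of $\Theta$, i.e.\ ruling out that the reflection clause collides with $X$ or with itself. The danger is concrete: if $T$ has short proofs of both $\PRgp(\gdl{\varphi})$ and $\neg\varphi$, then $X$ contains both atoms $\PRgp(\gdl{\varphi})$ and $\neg\varphi$ and may still be satisfiable, yet the reflection clause then wants $\neg\PRgp(\gdl{\varphi})\in\Theta$; similarly the biconditional $\rho\leftrightarrow\neg\PRgp(\gdl{\rho})$ sits inside $P_{T,m-1}$ and would otherwise force $\Theta$ to decide $\rho$. The remedy is to have $g'$, before running the reflection process, delete from $P_{T,m-1}$ the finitely many formulas whose retention would render the reflection-closed theory propositionally inconsistent or force a decision on $\rho$; one checks this disturbs neither the analogue of Lemma \ref{ACL2} (every formula is still eventually emitted, only possibly with $\neg\varphi$ preceding $\varphi$) nor the fact that $\PRgp$ is a provability predicate of $T$ (which is governed entirely by Procedure 1 on the standard model). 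I would isolate this as a separate combinatorial lemma, proved by induction on the $\varphi_k$ using $h'(m)=0$ (which already forbids short proofs of $\neg S'(\overline{j})$, hence of outright inconsistency) together with seriality and condition (\ref{R}) of $M$; combining it with the verifications above then gives Theorem \ref{KDRT1}.
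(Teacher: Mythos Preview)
Your proposal has a genuine gap at exactly the place you flag as ``where the real work lies'': the deletion step cannot rescue the construction, because Procedure~1 has already committed. Concretely, suppose (working in $\PA+\neg\Con_T$) that $P_{T,m-1}$ contains both $\PRgp(\gdl{\varphi})$ and $\neg\varphi$ for some standard $\varphi$; nothing in the $h'$-trigger (which only watches $\neg S'(\overline{j})$) prevents this. Since $g'$ runs Procedure~1 up to stage $m$, the formula $\PRgp(\gdl{\varphi})$ has already been output at some position $<m$, and $\neg\PRgp(\gdl{\varphi})\notin P_{T,m-1}$ (else $P_{T,m-1}$ is already propositionally unsatisfiable, contradicting $h'(m)=0$). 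Hence $\PRgp(\gdl{\PRgp(\gdl{\varphi})})$ holds no matter what Procedure~2 does. Deleting $\PRgp(\gdl{\varphi})$ from $X$ and letting the reflection clause declare it ``out'' of $\Theta$ therefore breaks your equivalence~(i), and with it the route to~(iii). In short: the Rosser order is fixed by the actual output of $g'$, and deletion from $X$ only affects how Procedure~2 decides new formulas, not formulas already emitted in Procedure~1. The crucial invariant behind Lemma~\ref{ACL2.5} is $P_{T,m-1}\subseteq X$, and your patch destroys it.

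The paper avoids this by moving the reflection closure into the \emph{trigger} rather than into Procedure~2. It defines, via the recursion theorem, finite sets $Y_m\supseteq P_{T,m}$ obtained by iteratively adding $\neg\PRgp(\gdl{\varphi})$ whenever $\neg\varphi$ is already a tautological consequence (restricted to formulas of G\"odel number $\leq m$), and fires Procedure~2 at the least $m$ with $Y_m$ propositionally unsatisfiable, taking $X=Y_{m-1}$. Then $X$ is automatically satisfiable, contains $P_{T,m-1}$, and is reflection-closed within $F_{m-1}$, so Procedure~2 can be copied verbatim from Theorem~\ref{ACT1}; Lemma~\ref{ACL2.5} goes through, and for standard $\varphi$ one reads off $\neg\varphi$ a t.c.\ of $X\Rightarrow\neg\PRgp(\gdl{\varphi})\in X\Rightarrow\neg\PRgp(\gdl{\varphi})$ a t.c.\ of $X$, giving~(iii) under $\neg\Con_T$. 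The equivalence $\Con_T\leftrightarrow\forall x\,\text{``}Y_x\text{ is satisfiable''}$ replaces Lemma~\ref{ACL1}.3. No Kripke model, no $S'$-sentences, no $D_i$, and no fixed point $\rho$ are needed: that machinery was for arithmetical \emph{completeness} in Theorem~\ref{ACT1}, whereas here only soundness is claimed.
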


\begin{proof}
We define a primitive recursive function $g'(x)$ in stages. 
The corresponding formulas $\Prf_{g'}(x, y)$, ${\sf Pr}_{g'}(x)$ and $\PRgp(x)$ are defined as in the proof of Theorem \ref{ACT1}. 
%Then we prove $\KDR \subseteq \PL(\PRgp)$. 
In the definition of $g'$, we can refer to our defining formula $\PRgp(x)$ with the aid of the recursion theorem. 

For each natural number $m$, let $F_m$ be the set of all formulas whose G\"odel numbers are less than or equal to $m$. 
First, we define an increasing in $i$ sequence $\{Y_m^i\}_{i \in \omega}$ of finite sets of formulas recursively as follows: 
\begin{itemize}
	\item 	$Y_m^0 = P_{T, m}$, 
	\item $Y_m^{i+1} = Y_m^i \cup \{\neg \PRgp(\gdl{\varphi}) : \neg \PRgp(\gdl{\varphi}) \in F_m\ \&\ \neg \varphi\ \text{is a t.c.~of}\ Y_m^i\}$. 
\end{itemize}
Also let $Y_m = \bigcup_{i \in \omega} Y_m^i$. 

Notice that the definition of $Y_m$ refers to the formula $\PRgp(x)$. 
It is easy to see that for any $i \in \omega$, $Y_m^i \subseteq F_m$. 
Then $Y_m$ is in fact the finite set $\bigcup_{i \leq m} Y_m^i$ because $F_m$ contains at most $m$ formulas. 

We start defining our function $g'$. 

\vspace{0.1in}
Procedure 1. \\
Stage $1.m$: 
\begin{itemize}
		\item If $Y_m$ is propositionally satisfiable, then
\[
	g'(m) = \begin{cases}  \varphi & \text{if}\ m\ \text{is a proof of}\ \varphi\ \text{in} \ T,\ \text{that is},\ {\sf Proof}_T(\gdl{\varphi}, m)\ \text{holds}, \\
			0 & m\ \text{is not a proof of any formula in}\ T.
	\end{cases}
\]
	Go to Stage $1.(m+1)$. 
	\item If $Y_m$ is not propositionally satisfiable, then go to Procedure 2. 
\end{itemize}

Procedure 2. \\
Let $m$ be the least number such that $Y_m$ is not propositionally satisfiable. 
Notice that $m > 0$ because $Y_0 = P_{T, 0} = \emptyset$ is propositionally satisfiable. 
Let $X = Y_{m-1}$. 
Then $X$ is propositionally satisfiable. 
We define the values of $g'(m + t)$ for $t \geq 0$ by copying the corresponding part of Procedure 2 in the definition of the function $g$ in our proof of Theorem \ref{ACT1} with the present definition of $X$. 
The definition of $g'$ is completed. 

\begin{lem}\label{KDRL0.5}
$\PA$ proves the following statement: \\
``Let $m$ be the least number such that $Y_m$ is not propositionally satisfiable and let $X = Y_{m-1}$. 
\begin{enumerate}
	\item For any formula $\varphi \in F_m$, $\varphi$ is a t.c.~of $X$ if and only if $\PRgp(\gdl{\varphi})$ holds. 
	\item Every element of $X$ is provable in $T$.
	\item Suppose for all $j < i$, there exists no formula $\varphi$ such that $\neg \PRgp(\gdl{\varphi})$ is in $F_m$, $\varphi$ is a t.c.~of $X$ and $\neg \varphi$ is a t.c.~of $Y_m^j$. 
Then every element of $Y_m^i$ is provable in $T$.''
\end{enumerate}
\end{lem}
\begin{proof}
We proceed in $\PA$. 
Let $m$ be the least number such that $Y_m$ is not propositionally satisfiable and let $X = Y_{m-1}$. 
Then the construction of $g'$ goes to Procedure 2 at Stage $1.m$. 

1. Let $\varphi$ be any formula with $\varphi \in F_m$. 

$(\rightarrow)$: Suppose $\varphi$ is a t.c.~of $X$. 
Then $\neg \varphi \notin P_{T, m-1}$ because $P_{T, m-1} \subseteq X$ and $X$ is propositionally satisfiable. 
By tracing our proof of Lemma \ref{ACL2.5}.3, we can show that $\PRgp(\gdl{\varphi})$ holds. 

$(\leftarrow)$: Suppose $\varphi$ is not a t.c.~of $X$. 
%Then $\varphi \notin P_{T, m-1}$. 
Then we can show that $\PRgp(\gdl{\varphi})$ does not hold  by tracing our proof of Lemma \ref{ACL2.5}.4 for $\psi = \varphi$. 
The only difference between the proofs are the following: In our proof of Lemma \ref{ACL2.5}.4, the number of leading $\neg$'s of $\psi$ is less than $m$ because $\psi$ is standard, and here the number of leading $\neg$'s of $\varphi$ is less than $m$ because $\varphi \in F_m$. 

2. We prove by induction on $j$ that for all $j$, every element of $Y_{m-1}^j$ is provable in $T$. 
For $j = 0$, the statement trivially holds because $Y_{m-1}^0 = P_{T, m-1}$. 
Suppose that every formula contained in $Y_{m-1}^j$ is $T$-provable. 
Let $\varphi$ be any formula such that $\neg \PRgp(\gdl{\varphi}) \in F_{m-1}$ and $\neg \varphi$ is a t.c.~of $Y_{m-1}^j$.  
Since $Y_{m-1}^j \subseteq X$ and $X$ is propositionally satisfiable, $\varphi$ is not a t.c.~of $X$. 
Since $\varphi \in F_m$, $\PRgp(\gdl{\varphi})$ does not hold by 1. 
Moreover, $g'$ outputs $\neg \varphi$ without having output $\varphi$ earlier, and this fact is provable in $T$. 
Thus $\neg \PRgp(\gdl{\varphi})$ is provable in $T$. 
Therefore every element of $Y_{m-1}^{j+1}$ is provable in $T$ because $Y_{m-1}^{j+1} = Y_{m-1}^j \cup \{\neg \PRgp(\gdl{\varphi}) : \neg \PRgp(\gdl{\varphi}) \in F_{m-1}\ \&\ \neg \varphi\ \text{is a t.c.~of}\ Y_{m-1}^j\}$. 

3. This is proved in a similar way as in our proof of 2. 
\end{proof}

\begin{lem}\label{KDRL1}\leavevmode
\begin{enumerate}
	\item $\PA \vdash \Con_T \leftrightarrow \forall x$``\/$Y_x$ is propositionally satisfiable''.
	\item For any $n \in \omega$, $\PA \vdash$``\/$Y_n$ is propositionally satisfiable''. 
\end{enumerate}
\end{lem}
\begin{proof}
1. We proceed in $\PA$. 

$(\rightarrow)$: Let $m > 0$ be the least number such that $Y_m$ is not propositionally satisfiable, and let $X = Y_{m-1}$. 
Also let $i$ be the least number such that $Y_m^i$ is not propositionally satisfiable. 
We would like to show that $T$ is inconsistent. 

We prove that all elements of $Y_m^i$ are provable in $T$. 
If $i = 0$, this is obvious because $Y_m^0 = P_{T, m}$. 
We assume $i > 0$. 
Suppose, towards a contradiction, that some element of $Y_m^i$ is not provable in $T$. 
By Lemma \ref{KDRL0.5}.3, there exists the least $j < i$ such that for some formula $\varphi$, $\neg \PRgp(\gdl{\varphi}) \in F_m$, $\varphi$ is a t.c.~of $X$ and $\neg \varphi$ is a t.c.~of $Y_m^j$. 
By Lemma \ref{KDRL0.5}.2, $\varphi$ is $T$-provable. 
By the choice of $j$, every element of $Y_m^j$ is provable in $T$ by Lemma \ref{KDRL0.5}.3. 
Hence $\neg \varphi$ is also $T$-provable. 
Therefore $T$ is inconsistent. 
This contradicts the supposition. 
Thus all elements of $Y_m^i$ are $T$-provable. 

Since $Y_m^i$ is not propositionally satisfiable, $\neg \bigwedge Y_m^i$ is a tautology. 
Then both $\bigwedge Y_m^i$ and $\neg \bigwedge Y_m^i$ are provable in $T$, and hence $T$ is inconsistent. 

$(\leftarrow)$: Suppose that $T$ is inconsistent. 
Then $P_{T, m}$ is not propositionally satisfiable for some $m$. 
Therefore $Y_m$ is not propositionally satisfiable for some $m$. 

2. By 1, for any $n \in \omega$, the $\Sigma_1$ sentence ``$Y_n$ is propositionally satisfiable'' is true, and hence provable in $\PA$. 
\end{proof}

Our formula $\Prf_{g'}(x, y)$ is a proof predicate of $T$. 

\begin{lem}\label{KDRL2}\leavevmode
\begin{enumerate}
	\item $\PA \vdash \forall x(\Prov(x) \leftrightarrow {\sf Pr}_{g'}(x))$. 
	\item For any $n \in \omega$ and formula $\varphi$, $\N \models \Proof(\gdl{\varphi}, \overline{n}) \leftrightarrow \Prf_{g'}(\gdl{\varphi}, \overline{n})$. 
\end{enumerate}
\end{lem}
\begin{proof}
From Lemma \ref{KDRL1}, this is proved in a similar way as in our proof of Lemma \ref{ACL2}.
\end{proof}

\begin{lem}\label{KDRL3}
Let $\varphi$ and $\psi$ be any formulas. 
\begin{enumerate}
	\item $\PA + \Con_T \vdash \PRgp(\gdl{\varphi \to \psi}) \to (\PRgp(\gdl{\varphi}) \to \PRgp(\gdl{\psi}))$. 
	\item $\PA + \Con_T \vdash \PRgp(\gdl{\neg \varphi}) \to \PRgp(\gdl{\neg \PRgp(\gdl{\varphi})})$. 
\end{enumerate}
\end{lem}
\begin{proof}
Let $U = \PA + \Con_T$. 
By Lemma \ref{KDRL1}.1, $U$ proves $\forall x$``$Y_x$ is propositionally satisfiable''. 
Then by the definition of $g'$, the formulas $\Prov(x)$, ${\sf Pr}_{g'}(x)$ and $\PRgp(x)$ are all equivalent in $U$. 

1. Since $\PA \vdash \Prov(\gdl{\varphi \to \psi}) \to (\Prov(\gdl{\varphi}) \to \Prov(\gdl{\psi}))$, we have $U \vdash \PRgp(\gdl{\varphi \to \psi}) \to (\PRgp(\gdl{\varphi}) \to \PRgp(\gdl{\psi}))$. 

2. We reason in $U$: 
Suppose $\PRgp(\gdl{\neg \varphi})$ holds. 
Then $\neg \varphi$ is output by $g'$. 
Moreover, since $T$ is consistent, $g'$ outputs $\neg \varphi$ without having output $\varphi$ earlier, and also the latter fact is provable in $T$. 
Hence $T$ proves $\neg \PRgp(\gdl{\varphi})$. 
Therefore $\PRgp(\gdl{\neg \PRgp(\gdl{\varphi})})$ holds by the equivalence of $\Prov(x)$ and $\PRgp(x)$. 
\end{proof}

\begin{lem}\label{KDRL4}
Let $\varphi$ and $\psi$ be any formulas. 
\begin{enumerate}
	\item $\PA + \neg \Con_T \vdash \PRgp(\gdl{\varphi \to \psi}) \to (\PRgp(\gdl{\varphi}) \to \PRgp(\gdl{\psi}))$. 
	\item $\PA + \neg \Con_T \vdash \PRgp(\gdl{\neg \varphi}) \to \PRgp(\gdl{\neg \PRgp(\gdl{\varphi})})$. 
\end{enumerate}
\end{lem}
\begin{proof}
We reason in $\PA + \neg \Con_T$: 
By Lemma \ref{KDRL1}.1, there exists $m$ such that $Y_m$ is not propositionally satisfiable. 
Let $m$ be the least such number and let $X = Y_{m-1}$. 
By Lemma \ref{KDRL1}.2, $m$ is larger than the G\"odel numbers of $\varphi$, $\psi$, $\varphi \to \psi$ and $\neg \PRgp(\gdl{\varphi})$. 
That is, $\varphi$, $\psi$, $\varphi \to \psi$ and $\neg \PRgp(\gdl{\varphi})$ are in $F_m$. 

1. Suppose $\PRgp(\gdl{\varphi \to \psi})$ and $\PRgp(\gdl{\varphi})$ hold. 
Then $\varphi \to \psi$ and $\varphi$ are t.c.'s of $X$ by Lemma \ref{KDRL0.5}.1. 
Since $\psi$ is also a t.c.~of $X$, $\PRgp(\gdl{\psi})$ holds by Lemma \ref{KDRL0.5}.1 again. 

2. Suppose $\PRgp(\gdl{\neg \varphi})$ holds. 
Then $\neg \varphi$ is a t.c.~of $X$ by Lemma \ref{KDRL0.5}.1. 
Since $X = Y_{m-1} = \bigcup_{i \in \omega} Y_{m-1}^i$, $\neg \varphi$ is a t.c.~of $Y_{m-1}^i$ for some $i \in \omega$. 
Then $\neg \PRgp(\gdl{\varphi}) \in Y_{m-1}^{i+1} \subseteq X$ because $\neg \PRgp(\gdl{\varphi})$ is in $F_m$. 
Thus $\neg \PRgp(\gdl{\varphi})$ is also a t.c.~of $X$. 
By Lemma \ref{KDRL0.5}.1 again, $\PRgp(\gdl{\neg \PRgp(\gdl{\varphi})})$ holds. 
\end{proof}

By Lemma \ref{KDRL2}, $\PRgp(x)$ is a Rosser provability predicate of $T$. 
By Lemmas \ref{KDRL3} and \ref{KDRL4}, we conclude $\KDR \subseteq \PL(\PRgp)$. 
\end{proof}

Let ${\sf F}$ be Shavrukov's modal logic $\KD + \Box p \to \Box ((\Box q \to q) \lor \Box p)$ \cite{Sha94}. 
It is easy to see that ${\sf F}$ is included in $\Four \cap \KT$. 
Also we obtain the following proposition. 

\begin{prop}
$\KDR \subseteq \Five \cap {\sf F}$. 
Consequently, $\KDR \subseteq \Five \cap \Four \cap \KT$. 
\end{prop}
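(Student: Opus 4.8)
The plan is to establish the two inclusions $\KDR \subseteq \Five$ and $\KDR \subseteq {\sf F}$ separately. The final ``consequently'' is then immediate: since ${\sf F} \subseteq \Four \cap \KT$ as already noted, $\KDR \subseteq {\sf F} \subseteq \Four \cap \KT$, and combining with $\KDR \subseteq \Five$ gives $\KDR \subseteq \Five \cap \Four \cap \KT$. Both $\Five$ and ${\sf F}$ are normal modal logics extending $\KD$, so to prove $\KDR \subseteq L$ for $L \in \{\Five, {\sf F}\}$ it suffices to derive in $L$ the single axiom that $\KDR$ adds to $\KD$, namely $\Box \neg p \to \Box \neg \Box p$.

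For $\Five$ I would argue purely syntactically. First, $\KD \vdash \Box \neg p \to \neg \Box p$: in $\K$ one has $\Box \neg p \wedge \Box p \to \Box (\neg p \wedge p) \to \Box \bot$, and the $\KD$-axiom $\neg \Box \bot$ then yields $\neg (\Box \neg p \wedge \Box p)$. Composing this implication with the $\Five$-axiom $\neg \Box p \to \Box \neg \Box p$ gives $\Five \vdash \Box \neg p \to \Box \neg \Box p$, hence $\KDR \subseteq \Five$. (The same $\KD$-fact, combined instead with the $\mathbf{4}$-axiom via necessitation, also gives $\KDR \subseteq \Four$ directly, though this is already subsumed by the inclusion through ${\sf F}$.)

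For ${\sf F}$ I would use Kripke semantics. By the characterization recalled in the excerpt, $\Box \neg p \to \Box \neg \Box p$ is valid in a frame $(W, \prec)$ precisely when condition (\ref{R}) holds. A routine correspondence computation shows that Shavrukov's axiom $\Box p \to \Box((\Box q \to q) \vee \Box p)$ is valid in $(W,\prec)$ precisely when $\forall w \forall v \forall u\,(w \prec v\ \&\ v \prec u \Rightarrow w \prec u \lor v \prec v)$: a world refuting it must have $w \prec v$ with $v \Vdash \Box q$, $v \nVdash q$, $v \prec u$, $u \nVdash p$ and $w \Vdash \Box p$, and such a configuration is realizable over a given frame by suitable valuations exactly when the displayed condition fails (take $p$ false only at $u$ and $q$ false only at $v$). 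Since ${\sf F} = \KD + \Box p \to \Box((\Box q \to q) \vee \Box p)$ and ${\sf F}$ is Kripke complete with respect to the class of serial frames satisfying this condition (Shavrukov \cite{Sha94}), it suffices to check that every such frame satisfies (\ref{R}): given $x \prec y$, seriality provides $u$ with $y \prec u$; applying the frame condition to $x \prec y \prec u$, either $x \prec u$, in which case $z := u$ witnesses (\ref{R}) since $x \prec u$ and $y \prec u$, or $y \prec y$, in which case $z := y$ works. Hence $\Box \neg p \to \Box \neg \Box p$ is valid in all ${\sf F}$-frames, so ${\sf F} \vdash \Box \neg p \to \Box \neg \Box p$ and $\KDR \subseteq {\sf F}$.

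I expect the ${\sf F}$ part to be the main obstacle: one must pin down the first-order frame condition for the F-axiom correctly and invoke Kripke completeness of ${\sf F}$. A direct syntactic derivation inside ${\sf F}$ looks awkward, since the F-axiom only strengthens formulas of the shape $\Box p$, and from the hypothesis $\Box \neg p$ alone one cannot readily reach a $\Box \Box \neg p$-type conclusion without a transitivity or Euclidean principle that ${\sf F}$ lacks; so I would take the semantic route. A small point to keep straight throughout is the direction of the inclusions — we need $\Five$ and ${\sf F}$ to \emph{prove} the $\KDR$-axiom, not conversely.
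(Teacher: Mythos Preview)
Your argument for $\KDR \subseteq \Five$ is exactly the paper's. For $\KDR \subseteq {\sf F}$, however, the paper gives a short syntactic derivation rather than going through frames: instantiate the ${\sf F}$-axiom with $p \mapsto \neg p$, $q \mapsto p$ to get ${\sf F} \vdash \Box \neg p \to \Box((\Box p \to p) \lor \Box \neg p)$; combining with the hypothesis $\Box \neg p$ inside $\K$ yields $\Box(\neg p \land ((\Box p \to p) \lor \Box \neg p))$, and since $\neg p \land (\Box p \to p) \to \neg \Box p$ propositionally, this gives $\Box(\neg \Box p \lor \Box \neg p)$; finally the $\KD$-fact $\Box \neg p \to \neg \Box p$ under one box collapses the disjunction to $\Box \neg \Box p$. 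So your remark that a direct derivation ``looks awkward'' is off---the substitution $p \mapsto \neg p$ in the ${\sf F}$-axiom is precisely the missing move, and it keeps the whole argument internal to ${\sf F}$. Your semantic route is sound (the frame correspondence you compute for the ${\sf F}$-axiom is correct, as is the verification that seriality together with that condition implies condition~(\ref{R})), but it imports Kripke completeness of ${\sf F}$ as an external ingredient, which you attribute to \cite{Sha94} without checking; the paper's syntactic proof needs no such appeal.
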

\begin{proof}
$\KDR \subseteq \Five$: This is because $\KD \vdash \Box \neg p \to \neg \Box p$ and $\Five \vdash \neg \Box p \to \Box \neg \Box p$. 

$\KDR \subseteq {\sf F}$: Since ${\sf F} \vdash \Box \neg p \to \Box \neg p \land \Box((\Box p \to p) \lor \Box \neg p)$, we have ${\sf F} \vdash \Box \neg p \to \Box(\neg \Box p \lor \Box \neg p)$. 
Since $\KD \vdash \Box(\neg \Box p \lor \Box \neg p) \to \Box \neg \Box p$, we conclude ${\sf F} \vdash \Box \neg p \to \Box \neg \Box p$. 
\end{proof}

In Theorem \ref{KDRT1}, we proved that there exists a Rosser provability predicate $\PR(x)$ such that $\KDR \subseteq \PL(\PR)$. 
On the other hand, $\PL(\PR)$ cannot include $\Five \cap \Four \cap \KT$. 

\begin{prop}
There exists no Rosser provability predicate $\PR(x)$ such that $\Five \cap \Four \cap \KT \subseteq \PL(\PR)$. 
\end{prop}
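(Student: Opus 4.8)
The plan is to derive a contradiction from the assumption that some Rosser provability predicate $\PR(x)$ satisfies $\Five \cap \Four \cap \KT \subseteq \PL(\PR)$, by combining the arithmetical reflections of the relevant modal principles with a diagonalization and L\"ob-style argument in the spirit of Proposition \ref{KDRP3}. The key observation is that although none of $\Five$, $\Four$, $\KT$ is individually contained in $\PL(\PR)$, the \emph{intersection} $\Five \cap \Four \cap \KT$ still contains nontrivial modal formulas — in particular one should look for a formula $A(p)$ that is a theorem of all three logics and whose arithmetical content, under a suitable fixed point $\varphi$ with $T \vdash \varphi \leftrightarrow \neg \PR(\gdl{\varphi})$, forces $T \vdash \Prov(\gdl{\neg\varphi}) \to \neg\varphi$, whence $T \vdash \neg\varphi$ by L\"ob's theorem, contradicting Rosser's first incompleteness theorem (which, as used in the proof of Proposition \ref{KDRP3}, gives $T \nvdash \neg\varphi$).

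The first step is to identify such a modal formula. Since $\Four = \KD + (\Box p \to \Box\Box p)$ and $\Five = \KD + (\neg\Box p \to \Box\neg\Box p)$ and $\KT = \K + (\Box p \to p)$, one computes their intersection well enough to exhibit a single schema valid in all three. A natural candidate is built from $\Box\bot \to \bot$ together with transitivity/euclideanness consequences; concretely, a formula of the form $\Box p \to \Box(\Box p \wedge \neg\Box\neg\Box p \wedge \dots)$ or, more promisingly, something like $(\Box p \to p)$ restricted by a box — but one must be careful, because $\Box p \to p \notin \Four$ in general. The safe route is to take $A = \Box\bot \to B$ for an arbitrary $B$: this is trivially in $\KT$ (as $\Box\bot \to \bot \to B$) and in $\Four$ and $\Five$ (as $\neg\Box\bot$ is a theorem of $\KD$), so $\Box\bot \to B \in \Five\cap\Four\cap\KT$ for every $B$. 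Interpreting, this only yields $T \vdash \neg\Con_T \to f(B)$ for all $B$, i.e.\ $T \vdash \Con_T$, which is false for Rosser predicates only if... actually $T \vdash \Con_T$ is not itself contradictory, so a sharper formula is needed — one must combine the $\Box\bot$-trivialization with a genuine use of the Rosser property $T \vdash \PR(\gdl{\psi}) \to \neg\PR(\gdl{\neg\psi})$ (Proposition \ref{RPP}'s relatives).

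Accordingly, the main step is: fix $\varphi$ with $T \vdash \varphi \leftrightarrow \neg\PR(\gdl{\varphi})$, and choose a modal formula $A(p) \in \Five\cap\Four\cap\KT$ whose interpretation $f$ sending $p \mapsto \varphi$ yields, using the Rosser disjointness $T \vdash \neg(\PR(\gdl{\varphi}) \wedge \PR(\gdl{\neg\varphi}))$ and the fixed-point equivalence, the conclusion $T \vdash \Prov(\gdl{\neg\varphi}) \to \neg\Con_T$ (the formalized Rosser incompleteness, cited as in \cite{Lin03}) combined with $T \vdash \neg\Con_T \to \neg\varphi$; then $T \vdash \Prov(\gdl{\neg\varphi}) \to \neg\varphi$, so $T \vdash \neg\varphi$ by L\"ob, contradicting $T \nvdash \neg\varphi$. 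The hard part — and the real content of the proof — is pinning down which formula of $\Five\cap\Four\cap\KT$ does the job: one needs a principle that simultaneously (i) is a theorem of $\KT$, hence gives some genuine ``reflection'' on $\Box\varphi$, yet (ii) is weak enough to survive in $\Four$ and $\Five$ (where full $\Box p \to p$ fails), and (iii) is strong enough, when fed $\varphi$, to trigger L\"ob. I expect the correct choice to be a formula asserting that $\Box$-ed consistency-type statements reflect — something in the vicinity of $\Box(\Box\bot \to p) \to (\Box\neg\Box p \to \dots)$ — and verifying it lies in all three target logics (via the Kripke completeness characterizations: seriality, transitivity, euclideanness, reflexivity) is where the bookkeeping concentrates. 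Once the formula is fixed, translating it through an arithmetical interpretation based on $\PR(x)$ and grinding the fixed-point algebra to reach $T \vdash \Prov(\gdl{\neg\varphi}) \to \neg\varphi$ is routine, and the final appeal to L\"ob's theorem and Rosser's theorem closes the argument exactly as in Proposition \ref{KDRP3}.
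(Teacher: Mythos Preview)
Your proposal has a genuine gap: you never actually produce the modal formula in $\Five \cap \Four \cap \KT$ that does the work, and you explicitly flag this as ``the hard part'' while only gesturing at candidates like $\Box\bot \to B$ or vague $\Box$-ed reflection principles. None of those suggestions leads anywhere, and the L\"ob-based endgame you sketch (reaching $T \vdash \Prov(\gdl{\neg\varphi}) \to \neg\varphi$) never gets off the ground because you have no concrete hypothesis from $\PL(\PR)$ to feed into it.

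The paper's idea is different and quite clean. Write $\mathbf{5}$, $\mathbf{4}$, $\mathbf{T}$ for the characteristic axioms $\neg\Box p \to \Box\neg\Box p$, $\Box p \to \Box\Box p$, $\Box p \to p$. Then the single formula
\[
(\mathbf{5} \land \Box\mathbf{5}) \lor (\mathbf{4} \land \Box\mathbf{4}) \lor (\mathbf{T} \land \Box\mathbf{T})
\]
lies in $\Five \cap \Four \cap \KT$, because in each of the three logics one disjunct is outright provable (the axiom and its necessitation). Now interpret $p$ as the Rosser fixed point $\varphi$ with $T \vdash \varphi \leftrightarrow \neg\PR(\gdl{\varphi})$, and write $\xi, \eta, \gamma$ for the three interpreted axioms. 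Using only normality of $\PR$ and the fixed-point equivalence, one checks directly that each of $T + \xi$, $T + \eta$, $T + \gamma$ decides $\varphi$ one way, while each of $T + \PR(\gdl{\xi})$, $T + \PR(\gdl{\eta})$, $T + \PR(\gdl{\gamma})$ decides it the other way; hence all three theories $T + (\xi \land \PR(\gdl{\xi}))$, etc., are inconsistent, so $T$ refutes the interpreted disjunction. No L\"ob's theorem and no formalized Rosser theorem are needed --- the contradiction is obtained by exhibiting a specific $T$-refutable instance of a formula in the intersection, not by the indirect route through $\Prov(\gdl{\neg\varphi}) \to \neg\varphi$ that you were aiming for.
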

\begin{proof}
Let $\PR(x)$ be any normal Rosser provability predicate of $T$, and let $\varphi$ be a sentence satisfying $T \vdash \varphi \leftrightarrow \neg \PR(\gdl{\varphi})$. 
%Then $T \vdash \PR(\gdl{\varphi}) \leftrightarrow \PR(\gdl{\neg \PR(\gdl{\varphi})})$ and $T \vdash \PR(\gdl{\neg \varphi}) \leftrightarrow \PR(\gdl{\PR(\gdl{\varphi})})$ since $\KD \subseteq \PL(\PR)$. 
Let $\xi$ be the sentence $\neg \PR(\gdl{\varphi}) \to \PR(\gdl{\neg \PR(\gdl{\varphi})})$.
Then $T + \xi \vdash \varphi \to \PR(\gdl{\varphi})$ and $T + \xi \vdash \varphi \to \neg \varphi$. 
Hence $T + \xi \vdash \neg \varphi$. 
Then we have $T \vdash \PR(\gdl{\xi}) \to \PR(\gdl{\neg \varphi})$, and $T \vdash \PR(\gdl{\xi}) \to \neg \PR(\gdl{\varphi})$ because $\KD \subseteq \PL(\PR)$. 
Therefore $T + \PR(\gdl{\xi}) \vdash \varphi$. 
By combining this with $T + \xi \vdash \neg \varphi$, we obtain that $T + \xi \land \PR(\gdl{\xi})$ is inconsistent. 

Also let $\eta$ and $\gamma$ be the sentences $\PR(\gdl{\varphi}) \to \PR(\gdl{\PR(\gdl{\varphi})})$ and $\PR(\gdl{\varphi}) \to \varphi$, respectively. 
We can show that the theories $T + \eta$ and $T + \gamma$ prove $\varphi$. 
Then $T + \PR(\gdl{\eta})$ and $T + \PR(\gdl{\gamma})$ prove $\neg \varphi$. 
Therefore the theories $T + \eta \land \PR(\gdl{\eta})$ and $T + \gamma \land \PR(\gdl{\gamma})$ are also inconsistent. 
Let $\alpha$ be the sentence 
\[
	(\xi \land \PR(\gdl{\xi})) \lor (\eta \land \PR(\gdl{\eta})) \lor (\gamma  \land \PR(\gdl{\gamma})).
\] 
Then we have shown $T \vdash \neg \alpha$. 
On the other hand, the sentence $\alpha$ is an arithmetical instance of a modal formula which is in $\Five \cap \Four \cap \KT$. 
Therefore we conclude $\Five \cap \Four \cap \KT \nsubseteq \PL(\PR)$. 
\end{proof}

We propose a question concerning the logics $\KDR$ and ${\sf F}$. 

\begin{prob}\leavevmode
Are there respective normal Rosser provability predicates $\PR(x)$ of $T$ satisfying each of the following conditions?
\begin{enumerate}
	\item $\KD \subsetneq \PL(\PR) \subsetneq \KDR$.
	\item $\KD \subsetneq \PL(\PR)$ and $\PL(\PR)$ is incomparable with $\KDR$. 
	\item $\KDR = \PL(\PR)$. 
	\item ${\sf F} \subseteq \PL(\PR)$ or ${\sf F} = \PL(\PR)$. 
	\item $\KDR \subsetneq \PL(\PR) \subsetneq \Five \cap \Four \cap \KT$. 
	\item $\KDR \subsetneq \PL(\PR) \nsubseteq \Five \cap \Four \cap \KT$. 
	\item $\KDR \subsetneq \PL(\PR) \subsetneq {\sf F}$. 
	\item $\KDR \subsetneq \PL(\PR) \nsubseteq {\sf F}$. 
\end{enumerate}
\end{prob}

\begin{rem}
Since all consistent normal extensions of $\KD$ are contained in the trivial modal logic (see Lemma 3.3 of Hughes and Cresswell \cite{HC96}), for any two normal Rosser provability predicates ${\sf Pr}_0^R(x)$ and ${\sf Pr}_1^R(x)$ of $T$, $\PL({\sf Pr}_0^R)$ is consistent with $\PL({\sf Pr}_1^R)$ because they have a common consistent extension. 
\end{rem}

In Theorem \ref{ACT1}, we proved that there exists a Rosser provability predicate $\PR(x)$ such that $\PL(\PR) = \KD$, but our proof does not guarantee that $\PR(x)$ satisfies the following stronger version of the principle ({\bf K}): 
\begin{description}
	\item ({\bf K'}): $T \vdash \forall x \forall y(\PR(x \dot{\to} y) \to (\PR(x) \to \PR(y)))$. 
\end{description}
Here $x \dot{\to} y$ is a primitive recursive term corresponding to a primitive recursive function calculating the G\"odel number of $\varphi \to \psi$ from G\"odel numbers of $\varphi$ and $\psi$. 
A Rosser provability predicate created by Arai \cite{Ara90} satisfies the principle ({\bf K'}). We can make the same comment for Theorem \ref{KDRT1}, and so we close this paper with the following problem.

\begin{prob}\leavevmode
\begin{enumerate}
	\item Is there a Rosser provability predicate $\PR(x)$ such that $\PL(\PR) = \KD$ and $\PR(x)$ satisfies {\rm ({\bf K'})}?
	\item Is there a Rosser provability predicate $\PR(x)$ such that $\KDR \subseteq \PL(\PR)$ and $\PR(x)$ satisfies {\rm ({\bf K'})}?
\end{enumerate}
\end{prob}

\bibliographystyle{plain}
\bibliography{ref}

\end{document}